\numberwithin{equation}{section}
\newtheorem{thr}{Theorem}[section]
\theoremstyle{definition}
\newtheorem{defn}[thr]{Definition}
\theoremstyle{remark}
\newtheorem{rem}{Remark}
\theoremstyle{plain}
\newtheorem{lem}[thr]{Lemma}
\theoremstyle{plain}
\newtheorem{prop}[thr]{Proposition}
\theoremstyle{plain}
\newtheorem{cor}[thr]{Corollary}
\newcommand{\abs}[1]{\lvert#1\rvert}
\newcommand{\tab}{\mathrm{t}}
\DeclareMathOperator{\STD}{\mathrm{STD}}
\DeclareMathOperator{\Tab}{\mathrm{Tab}}
\DeclareMathOperator{\shape}{\mathrm{Shape}}
\DeclareMathOperator{\SD}{\mathrm{SD}}
\DeclareMathOperator{\SA}{\mathrm{SA}}
\DeclareMathOperator{\WD}{\mathrm{WD}}
\DeclareMathOperator{\WA}{\mathrm{WA}}
\DeclareMathOperator{\D}{\mathrm{D}}
\DeclareMathOperator{\A}{\mathrm{A}}
\DeclareMathOperator{\Pos}{\mathrm{Pos}}
\newcommand{\row}{\mathrm{row}}
\newcommand{\col}{\mathrm{col}}
\theoremstyle{remark}
\begin{document}    % start of the "text" in the document

%%%%%%%%%%%%%%%%%%%%%%%%%%%%%%%%%%%%%%%%%%%%%%%%%%%%%%%%
\title[\unboldmath $W\!$-\,graph ideals II.]{\boldmath $W\!$-\,graph ideals II}\label{Pre}

\author{\emph{\textbf{Van Minh Nguyen}}}

\begin{abstract}
In \cite{howvan:wgraphDetSets}, the concept of a \(W\!\)-\,graph
ideal in a Coxeter group was introduced, and it was shown how a
\(W\!\)-\,graph can be constructed from a given \(W\!\)-\,graph
ideal. In this paper, we describe a class of \(W\!\)-\,graph
ideals from which certain Kazhdan-Lusztig left cells arise. The
result justifies the algorithm as illustrated in
\cite{howvan:wgraphDetSets} for the construction of
\(W\!\)-\,graphs for Specht modules for the Hecke algebra of type
\(A\).
\end{abstract}

\maketitle

\section{Introduction}

Let \((W,S)\) be a Coxeter system and \(\mathcal{H}(W)\) its Hecke
algebra over \(\mathbb{Z}[q,q^{-1}]\), the ring of Laurent
polynomials in the indeterminate \(q\). In
\cite{howvan:wgraphDetSets}, we introduced the concept of a
\textit{\(W\!\)-graph ideal\/} in \((W,\le_L)\) with respect to a
subset \(J\) of \(S\), where \(\le_L\) is the left weak Bruhat
order on \(W\!\), and gave a Kazhdan-Lusztig like algorithm to
produce, for any such ideal~\(\mathscr{I}\!\), a \(W\!\)-graph
with vertices indexed by the elements of~\(\mathscr{I}\!\). In
particular, \(W\) itself is a \(W\!\)-graph ideal with respect to
\(\emptyset\), and the \(W\!\)-graph obtained is the
Kazhdan-Lusztig \(W\!\)-graph for the regular representation of
\(\mathcal{H}(W)\) (as defined in~\cite{kazlus:coxhecke}). More
generally, it was shown that if \(J\) is an arbitrary subset of
\(S\) then \(D_{J}\), the set of distinguished left coset
representatives of \(W_{J}\) in \(W\), is a \(W\!\)-graph ideal
with respect to~\(J\) and also with respect to~\(\emptyset\), and
Deodhar's parabolic analogues of the Kazhdan-Lusztig construction
are recovered. In this paper we continue the work in
\cite{howvan:wgraphDetSets}, and describe a larger class of
\(W\!\)-\,graph ideals for an arbitrary Coxeter group. Our main
aim is to show how to construct \(W\!\)-\,graphs for a wide class
of Kazhdan-Lusztig left cells, without having to first construct
the full Kazhdan-Lusztig \(W\!\)-graph corresponding to the
regular representation.

To this end we investigate conditions that are sufficient for a
sub-ideal of a given \(W\!\)-graph ideal (with respect to the left
weak order) to itself be a \(W\!\)-graph ideal. We find that if
the sub-ideal is a union of cells then it is a \(W\!\)-graph
ideal. It should be noted that, during the course of the proof, we
are required to verify a technical result that says that certain
structural constants of the associated \(\mathcal{H}(W)\)\,-module
are polynomials that are divisible by \(q\). In particular, for
the Kazhdan-Lusztig \(W\!\)-graph for the regular representation,
we find that if \(\mathcal{C}\) is the left cell that
contains~\(w_{J}\), the longest element of the finite standard
parabolic subgroup~\(W_J\), then \(\mathcal{C}w_{J}\) is a
\(W\!\)-graph ideal with respect to \(J\). Moreover, the
\(W\!\)-graph associated with the cell \(\mathcal{C}\) is
isomorphic to the \(W\!\)-graph constructed from the ideal
\(\mathcal{C}w_{J}\). The result shows that the algorithm in
\cite{howvan:wgraphDetSets} can be applied to construct
\(W\!\)-graphs for Kazhdan-Lusztig left cells that contain longest
elements of standard parabolic subgroups. In type \(A\), it is
known that each such cell is parametrized by the standard tableaux
of a fixed shape, and that the cell module is isomorphic to the
corresponding Specht module; hence the result justifies the
algorithm described in \cite{howvan:wgraphDetSets} for the
construction of \(W\!\)-graphs for Specht modules.

This paper is organized as follows. We start by recalling basic
definitions and facts concerning \(W\!\)-graphs; in particular,
cells and subquotients are discussed. In Section~3, we review the
notion of a \(W\!\)-graph ideal and the procedure for constructing
a \(W\!\)-graph from a \(W\!\)-graph ideal. Next, in Section~4, we
give a description of  \(W\!\)-graph ideals that arise from
sub-ideals of a given \(W\!\)-graph ideal, assuming certain
conditions. We deduce that if the cells of the associated
\(W\!\)-graph are ordered in the natural way, based on the
preorder by which cells are defined, then there is a unique
maximal cell, and this maximal cell is constructed from a
\(W\!\)-graph ideal. In Section~5 we show that the \(W\!\)-graph
for the left cell that contains the longest element of a standard
parabolic subgroup arises in the manner just described. The result
has some significant consequences when it is applied to Coxeter
groups of type~\(A\) and the associated Hecke algebras.
Specifically, it  justifies the way \(W\!\)-graphs for Specht
modules, which are exactly \(W\!\)-graphs for the corresponding
left cells, are calculated in \cite{howvan:wgraphDetSets}. These
topics are included in the discussion in Section 6.

\section{\(W\!\)-graphs, cells and subquotients}\label{Cox}

Since this is a continuation of the work in
\cite{howvan:wgraphDetSets}, we will assume the notation
introduced there. In particular, for a Coxeter system \((W,S)\),
we let \(l\) be its length function and let \(\leq\) and
\(\leq_{L}\) be its the Bruhat order and the left weak Bruhat
order respectively.

Let \(\mathcal{A} = \mathbb{Z}[q, q^{-1}]\), the ring of Laurent
polynomials with integer coefficients in the indeterminate \(q\),
and let \(\mathcal{A}^{+} = \mathbb{Z}[q]\), and let
\(\mathcal{H}(W) \) be the Hecke algebra associated with the
Coxeter system \((W, S)\). Our convention is that \(\mathcal{H}(W)
\) is the associative algebra over \(\mathcal{A}\) generated by
\(\{T_{s}\mid s\in S \}\), subject to the following defining
relations:
\begin{align*}
T^{2}_{s} &= 1 + (q -q^{-1})T_{s} \quad \text{for all \(s \in S\)},\\
T_{s}T_{s'}T_{s}\cdots &= T_{s'}T_{s}T_{s'}\cdots \quad \text{for
all \(s, s' \in S\)},
\end{align*}
where in the second of these there are \(m(s,s')\) factors on each
side, \(m(s,s')\) being the order of \(ss'\) in~\(W\). (We remark
that the traditional definition has \(T^{2}_{s}=q + (q -1)T_{s}\)
in place of the first relation above; our version is obtained by
replacing \(q\) by \(q^2\) and multiplying the generators
by~\(q^{-1}\).) It is well known that \(\mathcal{H}(W)\) is
\(\mathcal{A}\)-free with an \(\mathcal{A}\)-basis \((\, T_{w}\mid
w \in W\,)\) and multiplication satisfying
\begin{equation*}
T_{s}T_{w} = \begin{cases} T_{sw} & \text{if \(l(sw) > l(w)\),}\\
                            T_{sw} + (q - q^{-1})T_{w} & \text{if \(l(sw) < l(w)\).}
              \end{cases}
\end{equation*}
for all \(s\in S\) and \(w\in W\).

Let \(a \mapsto \overline{a}\) be the involutory automorphism of
\(\mathcal{A}=\mathbb{Z}[q,q^{-1}]\) defined by
\(\overline{q}=q^{-1}\). This extends to an involution on
\(\mathcal{H}(W)\) satisfying
\begin{equation*}
\overline{T_{s}} = T_{s}^{-1} = T_{s} - (q - q^{-1}) \quad
\text{for all \(s \in S\)}.
\end{equation*}

A \(W\!\)-graph \(\Gamma\) is a triple consisting of a set
\(V\!\), a function \(\mu\colon V \times V\to \mathbb{Z}\) and a
function \(\tau\) from \(V\) to the power set of \(S\), subject to
the requirement that the free \(\mathcal{A}\)-module with basis
\(V\) admits an \(\mathcal{H}\)-module structure satisfying
\begin{equation}\label{wgraphdef}
     T_{s}v = \begin{cases}
              -q^{-1}v \quad &\text{if \(s \in \tau(v)\)}\\
              qv + \sum_{\{u \in V \mid s \in \tau(u)\}}\mu(u,v)u
              \quad &\text{if \(s \notin \tau(v)\)},
     \end{cases}
\end{equation}
for all \(s \in S\) and \(v \in V\!\).

The set \(V\) is called the vertex set of \(\Gamma\), and there is
a directed edge from a vertex \(v\) to a vertex \(u\) if and only
if \(\mu(u,v) \neq 0\). When there is no ambiguity we may use the
notation \(\Gamma(V)\) for the \(W\!\)-graph with vertex
set~\(V\). We call the integer \(\mu(u,v)\) the \textit{weight\/}
of the edge from \(v\) to \(u\), and we call the set \(\tau(v)\)
the \textit{\(\tau\)-invariant\/} of the vertex~\(v\). The
\(\mathcal{H}(W)\)-module \(\mathcal{A}V\) encoded by \(\Gamma\)
will be denoted by \(M_{\Gamma}\).

Since \(M_{\Gamma}\) is \(\mathcal{A}\)-free it admits a unique
\(\mathcal{A}\)-semilinear involution \(\alpha \mapsto
\overline{\alpha}\) such that \(\overline v=v\) for all elements
\(v\) of the basis~\(V\). It follows from \eqref{wgraphdef} that
\(\overline{h\alpha} =\overline{h}\overline{\alpha}\) for all
\(h\in \mathcal{H}\) and \(\alpha \in M_{\Gamma}\).

Following \cite{kazlus:coxhecke}, define a preorder
\(\leq_{\Gamma}\) on the vertex set of \(\Gamma\) as follows: \(u
\leq_{\Gamma} v\) if there exists a sequence of vertices \(u =
x_{0},x_1,\ldots,,x_{m} = v\) such that \(\tau(x_{i-1}) \nsubseteq
\tau(x_{i})\) and \(\mu(x_{i-1},x_{i}) \neq 0\) for all \(i \in
[1,m]\). In other words, the preorder \(\leq_{\Gamma}\) is the
transitive closure of the relation \(\leftarrow_{\Gamma}\) on
\(V\) given by \(u\leftarrow_{\Gamma} v\) if \(\tau(u) \nsubseteq
\tau(v)\) and \(\mu(u,v) \neq 0\). Let \(\sim_{\Gamma}\) be the
equivalence relation corresponding to \(\leq_{\Gamma}\); that is,
\(u \sim_{\Gamma} v\) if and only if \( u \leq_{\Gamma}v\) and \(v
\leq_{\Gamma} u\). The equivalence classes with respect to
\(\sim_{\Gamma}\) are called the \textit{cells} of \(\Gamma\).
Each equivalence class, regarded as a full subgraph of \(\Gamma\),
is itself a \(W\!\)-\,graph, with the \(\mu\) and \(\tau\)
functions being the restrictions of those for \(\Gamma\). Thus, if
\(\mathcal{C}\) is a cell, then \(\Gamma(\mathcal{C}) =
(\mathcal{C}, \mu, \tau)\) is the \(W\!\)-\,graph associated with
the cell \(\mathcal{C}\). Observe that the preorder
\(\leq_{\Gamma}\) on the vertices induces a partial order on the
cells, via the rule that if \(\mathcal{C},\,\mathcal{C}'\) are
cells then \(\mathcal{C} \leq_{\Gamma} \mathcal{C}'\) if and only
if \(u \leq_{\Gamma} v\) for some (or, equivalently, all) \(u \in
\mathcal{C}\) and \(v \in \mathcal{C}'\).

Let \(U \subseteq V\). If \(U\) spans a
\(\mathcal{H}(W)\)-submodule of \(M_{\Gamma(V)}\), then \(U\) is
called a  \textit{closed subset} of \(V\). We see from Equation
(\ref{wgraphdef}) that this happens if and only if for all
vertices \(u\) and \(v\), if \(u\in U\) and \(v
\leftarrow_{\Gamma} u\) then \(v \in U\). (Note that in
\cite{stem:addwgraph} the term \textit{forward-closed} is used for
this concept.)

Provided that \(U\) is a closed subset of \(V\), the subgraphs
\(\Gamma(U)\) and \(\Gamma(V \setminus U)\) induced by \(U\) and
\(V \setminus U\) are themselves \(W\!\)-graphs, with edge weights
and \(\tau\) invariants inherited from \(\Gamma(V)\). Moreover, we
have
\begin{equation*}
M_{\Gamma(V \setminus U)} \cong M_{\Gamma(V)}/M_{\Gamma(U)} \quad
\text{as \(\mathcal{H}(W)\)\,-modules}.
\end{equation*}
If \(U_{2} \subseteq U_{1} \subseteq V\) is a nested sequence of
closed subsets of \(V\) then the \(W\!\)-graph \(\Gamma(U_{1}
\setminus U_{2})\) is called a \textit{subquotient} of
\(\Gamma(V)\), as the \(\mathcal{H}\)-module
\(M_{\Gamma(U_1/U_2)}\) is a quotient of a submodule of
\(M_{\Gamma(V)}\). It can be seen that if \(\Gamma(V)\) has no
non-empty proper subquotients then it consists of a single cell.

Let \(\Gamma(W) = (W, \mu, \tau)\) be the Kazhdan-Lusztig
\(W\!\)-\,graph, as defined in~\cite{kazlus:coxhecke}. Thus
\[
\mu(y,w)    = \begin{cases}
                    \mu_{y,w} &\quad \text{if \(y < w\)}\\
                    \mu_{w,y} &\quad \text{if \(w < y\)}
            \end{cases}\\
\]
where \(\mu_{y,w}\) is either zero or the leading coefficient of a
certain polynomial~\(P_{y,w}\), and
\[
\tau(w) = \mathcal{L}(w)= \{s \in S \mid l(sw) < l(w)\}.
\]
In fact Kazhdan and Lusztig show that \(W\) can be given the
structure of a \(W\times W^o\)-graph, where \(W^o\) is the
opposite of the group \(W\), but in the present paper we are
concerned only with the \(W\!\)-\,graph structure. The equivalence
classes determined by the preorder \(\le_{\Gamma(W)}\) (as defined
above) are called the \textit{left cells} of~\(W\).

\section{\(W\!\)-graph ideals}\label{section3}

Let \((W,S)\) be a Coxeter system and \(\mathcal{H}\) the
associated Hecke algebra. As in \cite{howvan:wgraphDetSets}, we
find it convenient to define \(\Pos(X)=\{\,s\in S\mid
l(xs)>l(x)\text{ for all }x\in X\,\}\), so that \(\Pos(X)\) is the
largest subset \(J\) of \(S\) such that \(X\subseteq D_J\). Let
\(\mathscr I\) be an ideal in the poset \((W,\leq_{L})\); that is,
\(\mathscr{I}\) is a subset of \(W\) such that every \(u\in W\)
that is a suffix of an element of \(\mathscr{I}\) is itself
in~\(\mathscr{I}\!\). This condition implies that
\(\Pos(\mathscr{I})=S\setminus\mathscr{I}= \{\,s\in S\mid
s\notin\mathscr{I}\,\}\). Let \(J\) be a subset of
\(\Pos(\mathscr{I})\),  so that \(\mathscr I\subseteq D_J\). For
each \(w \in \mathscr I\) we define the following subsets
of~\(S\):
\begin{align*}
\SA(\mathscr I,w)&=\{\,s\in S\mid sw>w\text{ and }sw\in\mathscr I\,\},\\
\SD(\mathscr I,w)&=\{\,s\in S\mid sw<w\,\},\\
\WA_{J}(\mathscr I,w)&=\{\,s\in S\mid sw> w\text{ and }
sw\in D_{J}\setminus\mathscr I\,\},\\
\WD_{J}(\mathscr I,w)&=\{\,s\in S\mid sw>w\text{ and }sw\notin
D_{J}\,\}.
\end{align*}
Since  \(\mathscr I\subseteq D_J\) it is clear that, for each \(w
\in \mathscr I\), each \(s \in S\) appears in exactly one of the
four sets defined above. We call the elements of these sets the
strong ascents, strong descents, weak ascents and weak descents
of~\(w\) relative to \(\mathscr{I}\) and~\(J\). In contexts where
the ideal \(\mathscr I\) and the set~\(J\) are fixed we may omit
reference to them and write, for example, \(\WA(w)\) rather than
\(\WA_{J}(\mathscr I,w)\). We also define the sets of descents and
ascents of \(w\) by \(\D_{J}(\mathscr I,w) = \SD(\mathscr I,w)
\cup \WD_{J}(\mathscr I,w)\) and \(\A_{J}(\mathscr I,w) =
\SA(\mathscr I,w) \cup \WA_{J}(\mathscr I,w)\).

\begin{rem}
It follows from Lemma \cite[Lemma 2.1. (iii)]{deo:paraKL} that
\begin{align*}
\WA(w) &= \{\,s \in S \mid sw \notin \mathscr I \text{ and } w^{-1}sw \notin J\,\}\\
\noalign{\vskip-6 pt\hbox{and}\vskip-6 pt} \WD(w) &= \{\,s \in S
\mid sw \notin \mathscr I \text{ and } w^{-1}sw \in J\,\},
\end{align*}
since \(sw \notin \mathscr I\) implies that \(sw>w\) (given that
\(\mathscr I\) is an ideal in \((W,\le_L)\)). Note also that
\(J=\WD(1)\).
\end{rem}

\begin{defn}\label{wgphdetelt}
With the above notation, the set \(\mathscr{I}\) is said to be a
\textit{\(W\!\)-graph ideal\/} with respect to \(J\) if the
following hypotheses are satisfied.
\begin{itemize}
\item[(i)] There exists an \(\mathcal{A}\)-free
\(\mathcal{H}\)-module \(\mathscr{S}=\mathscr{S}(\mathscr{I},J)\)
possessing an \(\mathcal{A}\)-basis \(B=(\,b_{w}\mid w\in\mathscr
I\,)\) on which the generators \(T_{s}\) act by
\begin{equation}\label{S_0action}
T_{s}b_{w} =
\begin{cases}
  b_{sw}  & \text{if \(s \in \SA(w)\),}\\
  b_{sw} + (q - q^{-1})b_{w} & \text{if \(s \in \SD(w)\),}\\
  -q^{-1}b_{w} & \text{if \(s \in \WD(w)\),}\\
  qb_{w} - \sum\limits_{\substack{y \in \mathscr I\\y < sw}} r^s_{y,w}b_{y} &
  \text{if \(s \in \WA(w)\),}
\end{cases}
\end{equation}
for some polynomials \(r^s_{y,w} \in q\mathcal{A}^{+}\!\).
\item[(ii)] The module \(\mathscr{S}\) admits an
\(\mathcal{A}\)-semilinear involution \(\alpha \mapsto
\overline{\alpha}\) satisfying \(\overline{b_1}=b_1\) and
\(\overline{h\alpha} =\overline{h}\overline{\alpha}\) for all
\(h\in \mathcal{H}\) and \(\alpha \in \mathscr{S}\).
\end{itemize}
\end{defn}

An obvious induction on \(l(w)\) shows that \(b_w=T_wb_1\) for all
\(w\in\mathscr I\).
\begin{defn}
If \(w\in W\) and \(\mathscr{I}=\{\,u\in W\mid u\le_Lw\,\}\) is a
\(W\!\)-graph ideal with respect to some \(J\subseteq S\) then we
call \(w\) a \textit{\(W\!\)-graph determining element}.
\end{defn}

\begin{rem}
It has been verified in \cite[Section 5]{howvan:wgraphDetSets}
that if \(W\) is finite then \(w_S\), the maximal length element
of~\(W\!\), is a \(W\!\)-graph determining element with respect to
\(\emptyset\), and \(d_{J}\), the minimal length element of the
left coset \(w_SW_{J}\), is a \(W\!\)-graph determining element
with respect to~\(J\) and also with respect to~\(\emptyset\).
\end{rem}

Let \(\mathscr I\) be a \(W\!\)-graph ideal with respect to
\(J\subseteq S\) and let \(\mathscr{S}(\mathscr{I},J)\) be the
corresponding \(\mathcal{H}\)-module (from
Definition~\ref{wgphdetelt}). From these data one can construct a
\(W\!\)-graph \(\Gamma\) with
\(M_\Gamma=\mathscr{S}(\mathscr{I},J)\). Specifically, the
following results are proved in~\cite{howvan:wgraphDetSets}.

\begin{lem}\cite[Lemma 7.2.]{howvan:wgraphDetSets}\label{uniCbasis1}
The \(\mathcal H\)-module \(\mathscr{S}(\mathscr{I},J)\) in
Definition~\ref{wgphdetelt} has a unique \(\mathcal{A}\)-basis \(C
= (\,c_{w} \mid w\in \mathscr I\,)\) such that for all \(w \in
\mathscr I\) we have \(\overline{c_{w}} = c_{w}\) and
\begin{equation}\label{qpoly1}
b_{w} = c_{w} + q\sum_{y < w} q_{y,w}c_{y}
\end{equation}
for certain polynomials \(q_{y,w} \in \mathcal{A}^{+}\).
\end{lem}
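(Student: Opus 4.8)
The plan is to mimic the classical Kazhdan--Lusztig argument producing the $C$-basis from the standard basis, adapted to the module $\mathscr{S}(\mathscr{I},J)$. First I would set up the correct positivity/triangularity framework. Using $b_w = T_w b_1$ together with the action formulas \eqref{S_0action}, an induction on $l(w)$ shows that the change-of-basis matrix between $(b_w)$ and $(\overline{b_w})$ is unitriangular with respect to the Bruhat order $\le$ on $\mathscr{I}$: that is, $\overline{b_w} = b_w + \sum_{y<w} a_{y,w} b_y$ for some $a_{y,w}\in\mathcal{A}$, with $\overline{b_w}$ meaning the image under the semilinear involution of Definition~\ref{wgphdetelt}(ii). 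The key input here is that applying $\overline{\phantom{x}}$ to $b_w = T_w b_1$ gives $\overline{b_w} = \overline{T_w}\,b_1$, and $\overline{T_w} = T_{w^{-1}}^{-1}$ expands in the $T_y$ with coefficients supported on $y\le w$; combined with the formulas \eqref{S_0action} one checks the $b_y$-expansion stays Bruhat-below $w$. The semilinearity $\overline{h\alpha}=\overline h\,\overline\alpha$ and $\overline{\overline{b_w}}=b_w$ force the usual cocycle-type identity $\sum_{y\le z\le w}\overline{a_{y,z}}\,a_{z,w}=\delta_{y,w}$ on these coefficients.

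Next comes the extraction of the $C$-basis by the standard descending induction on $l(w)$. Assume $c_y$ has been constructed for all $y$ with $l(y)<l(w)$ (or $y<w$), satisfying $\overline{c_y}=c_y$ and the triangularity \eqref{qpoly1} with $q_{z,y}\in\mathcal{A}^+$. Write the tentative $c_w = b_w + \sum_{y<w}\beta_{y,w}c_y$ and impose $\overline{c_w}=c_w$; using $\overline{b_w}=\sum_{z\le w}a_{z,w}b_z$ and $\overline{c_y}=c_y$, re-expanding everything in the already-known bar-invariant $c_z$ yields, for each $y<w$, a relation of the shape $\overline{\beta_{y,w}} - \beta_{y,w} = (\text{known expression in the }a\text{'s and earlier }\beta\text{'s})$. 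The standard lemma (as in \cite{kazlus:coxhecke}) that a $\mathcal{Z}[q,q^{-1}]$-element $f$ with $\overline f - f$ equal to a prescribed anti-bar-invariant quantity is uniquely solvable with $f\in q\mathcal{A}^+$ then determines each $\beta_{y,w}$ uniquely, and we set $q_{y,w}=\beta_{y,w}/q$ (after checking $\beta_{y,w}\in q\mathcal{A}^+$, so $q_{y,w}\in\mathcal{A}^+$). Uniqueness of the whole basis $C$ follows because any two bar-invariant bases satisfying \eqref{qpoly1} differ by a bar-invariant unitriangular matrix with entries in $q\mathcal{A}^+$, which must be the identity.

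The main obstacle I anticipate is verifying that the relevant coefficients land in $q\mathcal{A}^+$ rather than merely in $\mathcal{A}$, i.e.\ controlling the interaction between the weak-ascent case of \eqref{S_0action} — where the structure constants $r^s_{y,w}$ are only assumed to lie in $q\mathcal{A}^+$ — and the involution. Concretely, one must show the anti-bar-invariant right-hand side produced at stage $w$ has the form $\overline f - f$ for a (necessarily unique) $f\in q\mathcal{A}^+$; this is where the hypothesis $r^s_{y,w}\in q\mathcal{A}^+$ and the normalization $\overline{b_1}=b_1$ are essential, and it parallels the positivity bookkeeping in \cite{howvan:wgraphDetSets}. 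A secondary technical point is bookkeeping the support condition ``$y<w$'' versus ``$l(y)<l(w)$'' consistently through the induction, but since $b_w=T_wb_1$ and each $T_s$ in \eqref{S_0action} only introduces terms $b_y$ with $y\le sw$, this is routine. Once these are in place the statement follows.
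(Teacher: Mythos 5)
This lemma is quoted from \cite[Lemma 7.2]{howvan:wgraphDetSets} rather than proved in the present paper, and your proposal follows essentially the same route as that proof: unitriangularity of the bar involution on the basis $(b_w)$ with respect to the Bruhat order (using $b_w=T_wb_1$, $\overline{T_w}=T_{w^{-1}}^{-1}$ and the action formulas), followed by the standard Kazhdan--Lusztig induction solving $\overline{f}-f=r$ for $f\in q\mathcal{A}^{+}$. The ``obstacle'' you anticipate about coefficients landing in $q\mathcal{A}^{+}$ is in fact automatic: any anti-bar-invariant $r\in\mathcal{A}$ has zero constant term, so the equation $\overline{f}-f=r$ has a unique solution with only positive powers of $q$, no extra positivity bookkeeping required.
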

Define \(\mu_{y,w}\) to be the constant term of~\(q_{y,w}\). The
polynomials \(q_{y,w}\), where \(y < w\), can be computed
recursively by the following formulae.

\begin{cor}\cite[Corollary 7.4]{howvan:wgraphDetSets}\label{recursion}
Suppose that \(w<sw\in \mathscr I\) and \(y<sw\). If \(y=w\) then
\(q_{y,sw}=1\), and if \(y\ne w\) we have the following formulas:
\begin{itemize}
\item[(i)] \(q_{y,sw}=qq_{y,w}\) \ if \(s\in\A(y)\), \item[(ii)]
\(q_{y,sw} = -q^{-1}(q_{y,w}-\mu_{y,w})+q_{sy,w}
+\sum_x\mu_{y,x}q_{x,w}\) \ if \(s\in\SD(y)\), \item[(iii)]
\(q_{y,sw} = -q^{-1}(q_{y,w}-\mu_{y,w})+\sum_x\mu_{y,x}q_{x,w}\) \
if \(s\in\WD(y)\),
\end{itemize}
where \(q_{y,w}\) and \(\mu_{y,w}\) are regarded as \(0\) if
\(y\not<w\), and in \textup{(ii)} and \textup{(iii)} the sums
extend over all \(x\in \mathscr I\) such that \(y<x<w\) and
\(s\notin\D(x)\).
\end{cor}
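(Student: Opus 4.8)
The plan is to argue by induction on \(l(w)\), using the identity \(b_{sw}=T_sb_w\), which holds by the first case of~\eqref{S_0action} because \(w<sw\in\mathscr I\) means \(s\in\SA(w)\). Expanding \(b_w=c_w+q\sum_{z<w}q_{z,w}c_z\) by Lemma~\ref{uniCbasis1} and applying \(T_s\) gives
\[
b_{sw}=T_sc_w+q\sum_{z<w}q_{z,w}\,T_sc_z ,
\]
while on the other hand \(b_{sw}=c_{sw}+q\sum_{y<sw}q_{y,sw}c_y\), again by Lemma~\ref{uniCbasis1}. Comparing the coefficient of \(c_y\) on the two sides, for each \(y\in\mathscr I\) with \(y<sw\), will produce the claimed formulas, once we know how \(T_s\) acts on each \(c_z\) with \(l(z)\le l(w)\). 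That action is the \(W\!\)-graph action of~\eqref{wgraphdef} for the graph \(\Gamma\) built from \(\mathscr S(\mathscr I,J)\): writing \(\tau(c_u)=\D(u)\), one has \(T_sc_u=-q^{-1}c_u\) if \(s\in\D(u)\) and \(T_sc_u=qc_u+\sum_{\{v\,:\,s\in\D(v)\}}\mu(v,u)c_v\) if \(s\in\A(u)\), where the edge weight \(\mu(v,u)\) equals the constant term of \(q_{v,u}\) when \(v<u\) and of \(q_{u,v}\) when \(u<v\). For the induction we take this description as known for all \(c_z\) with \(l(z)<l(sw)\); in particular, applying the base case \(q_{u,su}=1\) of the recursion (with \(l(su)<l(sw)\)) yields \(\mu(y,sy)=1\) whenever \(sy<y\).

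The coefficient of \(c_w\) in \(b_{sw}\) is immediate: since \(s\notin\D(w)\), the term \(T_sc_w\) contributes \(qc_w\), and no summand \(T_sc_z\) with \(z<w\) contributes a \(c_w\) (such a summand produces \(c_v\) only for \(v=z\) or \(s\in\D(v)\), and \(v=w\) fails both). Hence \(q\,q_{w,sw}=q\), giving \(q_{w,sw}=1\). Now take \(y\ne w\). If \(s\in\A(y)\), i.e.\ \(s\notin\D(y)\), then \(T_sc_w\) contributes no \(c_y\), and in \(q\sum_{z<w}q_{z,w}T_sc_z\) only the term \(z=y\) does, contributing \(q\,q_{y,w}\cdot qc_y\); comparing with \(q\,q_{y,sw}c_y\) gives \(q_{y,sw}=q\,q_{y,w}\), which is~(i).

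If instead \(s\in\D(y)\), so \(T_sc_y=-q^{-1}c_y\), then the summands contributing a \(c_y\) to \(b_{sw}\) are: the term \(T_sc_w\), which (as \(s\in\D(y)\), \(s\notin\D(w)\)) contributes \(\mu(y,w)c_y=\mu_{y,w}c_y\); the term \(z=y\), contributing \(q\,q_{y,w}(-q^{-1})c_y=-q_{y,w}c_y\); for each \(x\in\mathscr I\) with \(y<x<w\) and \(s\notin\D(x)\) the term \(z=x\), contributing \(q\,q_{x,w}\mu(y,x)c_y=q\,\mu_{y,x}q_{x,w}c_y\); and, when \(s\in\SD(y)\), the term \(z=sy\). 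In the last case \(sy<y\) and \(sy\in\mathscr I\) (being a suffix of \(y\)), hence \(s\in\SA(sy)\), so \(T_sc_{sy}=q\,c_{sy}+\sum_{\{v\,:\,s\in\D(v)\}}\mu(v,sy)c_v\), and since \(\mu(y,sy)=1\) the term \(z=sy\) contributes \(q\,q_{sy,w}c_y\); when \(s\in\WD(y)\) we have \(sy>y\) and \(sy\notin\mathscr I\), so there is no such index \(z\) and no such contribution. Equating with \(q\,q_{y,sw}c_y\) and dividing by \(q\) gives
\[
q_{y,sw}=-q^{-1}\bigl(q_{y,w}-\mu_{y,w}\bigr)+q_{sy,w}+\sum_{x}\mu_{y,x}q_{x,w}
\]
when \(s\in\SD(y)\), and the same expression without the \(q_{sy,w}\) term when \(s\in\WD(y)\); these are~(ii) and~(iii).

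The step that needs real care — and the main obstacle — is the claim that the summands enumerated in the previous paragraph are the \emph{only} ones producing a \(c_y\); equivalently, that for \(z<w\) with \(s\notin\D(z)\) one has \(\mu(y,z)=0\) unless \(y<z\) or \(z=sy\). For \(z>y\) this is subsumed in the sum over \(x\); the real content is the case \(z\not>y\), for which one needs the \(W\!\)-graph analogue of the Kazhdan--Lusztig fact that \(\mu(y,z)\ne0\) together with \(s\in\D(y)\) forces \(s\in\D(z)\) unless \(z=sy\) (the first alternative kills the \(c_y\)-coefficient, since then \(T_sc_z=-q^{-1}c_z\), and the second is already accounted for). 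Establishing this descent-set implication uniformly for all four kinds of ascents and descents — in particular for weak descents, where it does not reduce to the classical statement — is where the special structure of the modules \(\mathscr S(\mathscr I,J)\), and the fact from Definition~\ref{wgphdetelt} that the structure constants \(r^s_{y,w}\) lie in \(q\mathcal A^{+}\), must be brought to bear.
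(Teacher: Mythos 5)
The paper itself gives no proof of this corollary---it is quoted from \cite{howvan:wgraphDetSets}---so your argument has to stand on its own. Your coefficient comparison is set up correctly and the algebra is right: comparing coefficients of \(c_y\) in \(b_{sw}=T_sb_w\) does yield \(q_{w,sw}=1\), formula (i), and, for \(s\in\D(y)\), the contributions \(\mu_{y,w}\), \(-q_{y,w}\), \(qq_{sy,w}\) and \(q\sum_x\mu_{y,x}q_{x,w}\) exactly as claimed. But you have not proved the statement, because the step you yourself flag as ``the main obstacle'' is precisely the mathematical content of the corollary, and you leave it as a gesture. Concretely: each term \(T_sc_z\) with \(z<w\), \(s\in\A(z)\) and \(z<y\) contributes \(q\,\mu_{z,y}\,q_{z,w}\) to the coefficient of \(c_y\), and unless \(\mu_{z,y}=0\) for every such \(z\neq sy\), formulas (ii) and (iii) acquire an extra summand \(\sum_{z<y}\mu_{z,y}q_{z,w}\) that is not in the statement. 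Saying that this vanishing ``must be brought to bear'' from the special structure of \(\mathscr S(\mathscr I,J)\) is not a proof. Note that when \(s\in\SD(y)\) the vanishing actually follows from part (i) applied one level down: writing \(y=s(sy)\) with \(sy<y\), induction gives \(q_{z,y}=qq_{z,sy}\in q\mathcal{A}^{+}\) for \(z\neq sy\) with \(s\in\A(z)\), so its constant term \(\mu_{z,y}\) is zero---you could and should have closed this case. The case \(s\in\WD(y)\), where \(y\) is not \(s\) times a shorter element of \(\mathscr I\), is the one with genuine content, and that is exactly where your argument stops.

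There is also a structural problem with what you allow yourself to assume. You invoke the \(W\!\)-graph action of \(T_s\) on the \(c_z\) (Theorem~\ref{main-wg}) for all \(z\) of length at most \(l(w)\), framing it as an inductive hypothesis; but in the source the recursion is established before, and is an ingredient of, the \(W\!\)-graph theorem. If you run a genuine simultaneous induction you must also re-establish the action formula for \(c_{sw}\) at each step to keep the hypothesis alive, and you never do this; if instead you cite Theorem~\ref{main-wg} wholesale, the argument risks circularity as a proof of the cited result. Either way, the derivation of the \(T_s\)-action on the \(C\)-basis---which is where the vanishing statement above is actually established---is the missing core of the argument.
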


Let \(\mu\colon C \times C \rightarrow \mathbb{Z}\) be given by
\begin{equation*}
\mu(c_{y},c_{w}) =
                     \begin{cases}
                         \mu_{y,w} &\text{if \(y < w\)}\\
                         \mu_{w,y} &\text{if \(w < y\)}\\
                         0 &\text{otherwise},
                     \end{cases}
\end{equation*}
and let \(\tau\) from \(C\) to the power set of~\(S\) be given by
\(\tau(c_{w}) = \D(w)\) for all~\(y\in\mathscr{I}\!\).
\begin{thr}\cite[Theorem 7.5.]{howvan:wgraphDetSets}\label{main-wg}
The triple \((C, \mu, \tau)\) is a \(W\!\)-graph.
\end{thr}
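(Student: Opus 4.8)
The plan is to verify directly that the triple $(C,\mu,\tau)$ satisfies the defining condition \eqref{wgraphdef} of a $W\!$-graph, where the $\mathcal H$-module structure on $\mathcal A C$ is the one it inherits as $\mathscr S(\mathscr I,J)$. Fix $s\in S$ and $w\in\mathscr I$; we must show that $T_s c_w$ equals $-q^{-1}c_w$ when $s\in\tau(c_w)=\D(w)$, and equals $qc_w+\sum_{\{u\mid s\in\D(u)\}}\mu(c_u,c_w)c_u$ when $s\notin\D(w)$. I would organize the argument by which of the four classes $\SA(w)$, $\SD(w)$, $\WD(w)$, $\WA(w)$ the generator $s$ falls into, since these partition $S$ for each $w\in\mathscr I$ (as noted after Definition~\ref{wgphdetelt}).

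First I would dispose of the descent cases. If $s\in\WD(w)$, then from \eqref{S_0action} we have $T_sb_w=-q^{-1}b_w$, and since the bar involution is semilinear and fixes each $c_y$ while $\overline{T_s}=T_s-(q-q^{-1})$, applying the bar involution to $T_sc_w$ and using the $\mathcal A^+$-triangularity in \eqref{qpoly1} forces $T_sc_w=-q^{-1}c_w$, exactly as in the classical Kazhdan--Lusztig argument: the element $T_sc_w+q^{-1}c_w$ is bar-invariant and lies in $q\mathcal A^+$-span of the lower $c_y$, hence is zero. If $s\in\SD(w)$, one writes $w=sw'$ with $w'<w$, so that $s\in\SA(w')$ and $b_w=T_sb_{w'}$; the same bar-invariance bookkeeping, now tracking the coefficient of $c_w$ itself, again yields $T_sc_w=-q^{-1}c_w$. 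The key input throughout is Lemma~\ref{uniCbasis1}: the basis $C$ is the unique bar-invariant basis with the stated triangularity, so any bar-invariant element expressible in the required triangular form is determined by its leading coefficients.

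For the ascent cases $s\in\SA(w)\cup\WA(w)$, i.e.\ $s\notin\D(w)$, I would compute $T_sc_w$ by substituting \eqref{qpoly1} into $T_sb_w$, using \eqref{S_0action} to expand $T_sb_y$ for each $y<w$ according to the class of $s$ relative to $y$, and then re-expanding the $b$'s back in terms of the $c$'s. The resulting expression for $T_sc_w-qc_w$ is bar-invariant (since $\overline{T_sc_w}=\overline{T_s}\,c_w=(T_s-(q-q^{-1}))c_w$ and $qc_w$ transforms correctly), and by the uniqueness in Lemma~\ref{uniCbasis1} it must be an $\mathcal A$-combination $\sum_y m_y c_y$ of lower basis elements in which each $m_y$ is a symmetric Laurent polynomial; a degree estimate using the recursion of Corollary~\ref{recursion} — precisely the computation showing $q_{y,w}\in\mathcal A^+$ with the stated leading behaviour — pins each $m_y$ down to a constant, namely $\mu_{y,w}$ when $s\in\D(y)$ and $0$ otherwise. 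Matching this against the definitions of $\mu$ and $\tau$ gives the second branch of \eqref{wgraphdef}.

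The main obstacle is the bookkeeping in the ascent case: one must show that after the double change of basis, the coefficient of each $c_y$ is (i) bar-invariant and (ii) of strictly negative degree unless $s\in\D(y)$, in which case it is the \emph{constant} $\mu_{y,w}$. Part (i) is formal from semilinearity, but part (ii) is where the hypothesis $r^s_{y,w}\in q\mathcal A^+$ from Definition~\ref{wgphdetelt}(i) and the defining recursions of Corollary~\ref{recursion} do the real work — this is the analogue of the classical fact that $P_{y,w}$ has degree at most $\tfrac12(l(w)-l(y)-1)$. Granting that degree bound, the uniqueness clause of Lemma~\ref{uniCbasis1} converts "bar-invariant plus correct triangularity" into "equals the claimed $W\!$-graph action", completing the proof. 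I would remark that, as the theorem is quoted from \cite{howvan:wgraphDetSets}, the verification of these degree estimates is carried out there in detail; here the point is simply that the three ingredients — the $T_s$-action \eqref{S_0action}, the bar involution of Definition~\ref{wgphdetelt}(ii), and the canonical basis of Lemma~\ref{uniCbasis1} — assemble exactly as in Kazhdan--Lusztig's original construction.
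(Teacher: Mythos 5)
First, a structural remark: this paper does not actually prove Theorem~\ref{main-wg} --- it is quoted verbatim from \cite[Theorem 7.5]{howvan:wgraphDetSets} and used as an input, so there is no in-paper argument to compare against. Your overall strategy (bar-invariance of the $c$-basis, the triangularity and uniqueness of Lemma~\ref{uniCbasis1}, and degree estimates coming from the recursion of Corollary~\ref{recursion}, assembled as in \cite{kazlus:coxhecke}) is the right one and is what the cited source carries out. But as a sketch of that proof it has a genuine gap in the ascent case. The second branch of \eqref{wgraphdef} asserts $T_sc_w=qc_w+\sum_{\{u\mid s\in\tau(u)\}}\mu(c_u,c_w)c_u$, and this sum runs over \emph{all} $u$ with $s\in\D(u)$, including $u>w$, where $\mu(c_u,c_w)=\mu_{w,u}$. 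You reduce the verification to computing coefficients of ``lower basis elements,'' which silently omits two things: (i) when $s\in\SA(w)$, the term $c_{sw}$ occurs in $T_sc_w$ with coefficient $1$, and one must match this with $\mu(c_{sw},c_w)=\mu_{w,sw}=1$ (easy, from $q_{w,sw}=1$); and, more seriously, (ii) one must prove that $\mu_{w,u}=0$ for every $u>w$ with $s\in\D(u)$ and $u\neq sw$, for otherwise \eqref{wgraphdef} would demand terms that visibly do not occur in $T_sc_w$. Point (ii) is the analogue of the classical fact that $y<z$, $s\in\mathcal L(z)\setminus\mathcal L(y)$ and $\mu(y,z)\neq0$ force $z=sy$; it does not follow from bar-invariance or triangularity. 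Part of it is available from Corollary~\ref{recursion}(i) (which gives $\mu_{y,u}=0$ when $s\in\SD(u)$ and $s\in\A(y)$ with $y\neq su$), but the case $s\in\WD(u)$ is not covered by that recursion and needs its own argument; this is precisely where a substantial portion of the work in \cite{howvan:wgraphDetSets} lies.

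A secondary soft spot is the descent case. For $s\in\WD(w)$ you assert that $T_sc_w+q^{-1}c_w$ ``lies in the $q\mathcal A^+$-span of the lower $c_y$, hence is zero.'' Bar-invariance of this element is indeed formal, but the claimed membership is not: expanding $(T_s+q^{-1})c_w=-\sum_{y<w}qp_{y,w}(T_s+q^{-1})b_y$ and using \eqref{S_0action}, the summands with $s\in\SA(y)$ contribute $qp_{y,w}\bigl(b_{sy}+q^{-1}b_y\bigr)=qp_{y,w}b_{sy}+p_{y,w}b_y$, and $p_{y,w}$ has constant term $\mu_{y,w}$, so constant terms appear and must be shown to cancel (against the $b_{sz}$ contributions with $sz=y$) before the ``bar-invariant element of $q\mathcal A^+C$ is zero'' principle applies. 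The strong descent case likewise needs the same cancellation analysis rather than just ``the same bookkeeping.'' None of this invalidates the approach --- it is exactly the Kazhdan--Lusztig-style computation the reference performs --- but the two omissions above are where the actual content of the theorem sits, and the final ``matching'' step of your argument is not legitimate until they are supplied.
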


It is immediate from Corollary~\ref{recursion} that if
\(w<sw\in\mathscr I\) then \(\mu_{w,sw}=q_{w,sw}=1\), and since
also \(\D(sw)\nsubseteq \D(w)\) (since
\(s\in\D(sw)\setminus\D(w)\)) it follows that
\(c_{sw}\le_{\Gamma(C)}c_w\). A straightforward induction on
length now yields the first part of the following result, which in
turn immediately yields the second part.

\begin{cor}\label{edgeup}
\begin{itemize}
\item[(i)] Let \(x\) and \(y\) be in \(\mathscr I\). If \(x
\leq_{L} y\) then \(c_{y} \leq_{\Gamma(C)} c_{x}\). \item[(ii)]
Let \(\mathcal{C}\) be a cell of \(\Gamma(C)\) and let
\(\mathscr{I}(\mathcal{C}) = \{\,w \in \mathscr{I} \mid c_{w} \in
\mathcal{C}\,\}\). If \(x,\,y\in\mathscr{I}(\mathcal{C})\) then
the interval \([x, y]_L=\{\,z\in W\mid x\leq_L z\leq_L y\,\}\) is
contained in \(\mathscr{I}(\mathcal{C})\).
\end{itemize}
\end{cor}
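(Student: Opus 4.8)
The plan is to prove (i) by reducing it to the one-step statement that the paragraph preceding the corollary already isolates, and then to obtain (ii) as a purely formal consequence of (i) together with the fact that \(\mathscr I\) is a \(\le_L\)-ideal. For the one-step claim, suppose \(w\in\mathscr I\) and \(s\in\SA(w)\), so that \(w<sw\in\mathscr I\). Taking \(y=w\) in Corollary~\ref{recursion} gives \(q_{w,sw}=1\), hence \(\mu_{w,sw}=1\), so \(\mu(c_{sw},c_w)=\mu_{w,sw}=1\ne0\). Also \(s\in\SD(sw)\subseteq\D(sw)=\tau(c_{sw})\), whereas \(s\notin\D(w)=\tau(c_w)\): indeed \(s\notin\SD(w)\) because \(sw>w\), and \(s\notin\WD(w)\) because \(sw\in\mathscr I\subseteq D_J\). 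Hence \(\tau(c_{sw})\nsubseteq\tau(c_w)\) and \(\mu(c_{sw},c_w)\ne0\), i.e. \(c_{sw}\leftarrow_{\Gamma(C)}c_w\), and in particular \(c_{sw}\le_{\Gamma(C)}c_w\).

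To prove (i), take \(x,y\in\mathscr I\) with \(x\le_L y\) and choose \(s_1,\dots,s_k\in S\) with \(y=s_k\cdots s_1x\) and \(l(s_i\cdots s_1x)=l(x)+i\) for \(0\le i\le k\); put \(x_i=s_i\cdots s_1x\), so that \(x_0=x\) and \(x_k=y\). Each \(x_i\) satisfies \(x_i\le_L y\), and therefore lies in \(\mathscr I\) since \(\mathscr I\) is an ideal in \((W,\le_L)\); consequently \(s_i\in\SA(x_{i-1})\) for every \(i\). The one-step claim now gives \(c_{x_i}\le_{\Gamma(C)}c_{x_{i-1}}\) for each \(i\), and transitivity of \(\le_{\Gamma(C)}\) yields \(c_y=c_{x_k}\le_{\Gamma(C)}c_{x_0}=c_x\). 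This is exactly the induction on \(l(y)-l(x)\) promised in the text.

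For (ii), let \(x,y\in\mathscr I(\mathcal C)\), so \(c_x\sim_{\Gamma(C)}c_y\), and let \(z\in[x,y]_L\). Since \(z\le_L y\in\mathscr I\) and \(\mathscr I\) is a \(\le_L\)-ideal, \(z\in\mathscr I\), so \(c_z\) is a vertex of \(\Gamma(C)\). Applying (i) to \(x\le_L z\) and to \(z\le_L y\) gives \(c_z\le_{\Gamma(C)}c_x\) and \(c_y\le_{\Gamma(C)}c_z\); together with \(c_x\le_{\Gamma(C)}c_y\) this forces \(c_y\le_{\Gamma(C)}c_z\le_{\Gamma(C)}c_x\le_{\Gamma(C)}c_y\), whence \(c_z\sim_{\Gamma(C)}c_x\), i.e. \(z\in\mathscr I(\mathcal C)\). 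No step here is genuinely hard: everything rests on Corollary~\ref{recursion}, the definitions of \(\D\), \(\mu\), \(\tau\) and \(\le_{\Gamma}\), and the ideal property. The only points deserving a moment's care are the two verifications that \(s\notin\D(w)\) in the one-step claim---not a strong descent because the step increases length, not a weak descent because \(sw\) returns to \(\mathscr I\subseteq D_J\)---and the observation that the intermediate elements \(x_i\) of the chain really do belong to \(\mathscr I\), which is what licenses writing \(s_i\in\SA(x_{i-1})\).
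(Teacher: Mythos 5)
Your proof is correct and follows essentially the same route as the paper: the one-step claim that $c_{sw}\le_{\Gamma(C)}c_w$ when $w<sw\in\mathscr I$ (via $q_{w,sw}=1$ from Corollary~\ref{recursion} and $s\in\D(sw)\setminus\D(w)$), then induction along a reduced chain for part~(i), and the formal sandwich argument for part~(ii). You merely spell out the details the paper leaves implicit, such as why $s\notin\WD(w)$ and why the intermediate elements $x_i$ lie in $\mathscr I$.
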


Inverting Equation (\ref{qpoly1}), we have
\begin{equation*}
c_{w} = b_{w} - \sum_{y < w}qp_{y,w}b_{y},
\end{equation*}
where \(p_{y,w} \in \mathcal{A}^{+}\) are defined recursively by
\begin{equation*}
p_{y,w} = q_{y,w} - \sum_{y<x<w}qp_{y,x}q_{x,w} \quad \text{if \(y
< w\)}.
\end{equation*}
Note that \(\mu_{y,w}\) is the constant term of \(p_{y,w}\).

\section{Subideals}\label{descr}

The main result of this section says (essentially) that a subideal
of a \(W\!\)-graph ideal is a \(W\!\)-graph ideal provided that
its complement is closed. We assume that \(\mathscr I\) is an
ideal in \((W,\leq_L)\) with \(\mathscr I\subseteq\mathscr{I}_0\),
where \(\mathscr{I}_{0}\) is a \(W\!\)-graph ideal with respect to
\(J \subseteq \Pos(\mathscr{I}_{0}\!)\). We adapt the notation of
Section~\ref{section3} by attaching a subscript or superscript
\(0\) to objects associated with the \(W\!\)-graph ideal
\(\mathscr{I}_0\). Thus we write \(\mathscr{S}_{0}\) for the
\(\mathcal{H}\)-module associated with \(\mathscr{I}_0\) and
\((\,b_{z}^{0} \mid z \in \mathscr I_{0}\,)\) for the basis of
\(\mathscr{S}_{0}\) that satisfies the conditions of
Definition~\ref{wgphdetelt}. By Lemma \ref{uniCbasis1} and Theorem
\ref{main-wg} we know that \(\mathscr{S}_{0}\) has a \(W\!\)-graph
basis \(C_{0} = \{c^{0}_{w} \mid w \in \mathscr{I}_{0}\}\) such
that
\begin{align}\label{btoc}
b^{0}_{w} &= c^{0}_{w} + \sum_{\substack{y \in \mathscr{I}_{0}\\y
<
w}}qq^{0}_{y,w}c^{0}_{y}\\[-12pt]
\intertext{and}\label{ctob} c^{0}_{w} &= b^{0}_{w} -
\sum_{\substack{y \in \mathscr{I}_{0}\\y <
w}}qp^{0}_{y,w}b^{0}_{y}
\end{align}
where the polynomials \(p^{0}_{y,w},\,q^{0}_{y,w} \in
\mathcal{A}^{+}\) are defined whenever~\(y<w\) and are related by
\begin{equation}\label{relpandq}
p^{0}_{y,w} = q^{0}_{y,w} - \sum_{y<x<w} qp^{0}_{y,x}q^{0}_{x,w}.
\end{equation}
Let \(\mu^{0}_{y,w}\) be the constant term of \(q^{0}_{y,w}\) (or,
equivalently, of \(p^{0}_{y,w}\)), so that, by
Theorem~\ref{main-wg}, the triple \(\Gamma(C_{0}) = (C_{0}, \mu,
\tau)\) is a \(W\!\)-graph, where the functions \(\mu\) and
\(\tau\) are given by
\begin{equation*}
\mu(c^{0}_{y},c^{0}_{w})    = \begin{cases}
                               \mu^{0}_{y,w}   &\quad \text{if \(y < w\)}\\
                               \mu^{0}_{w,y}   &\quad \text{if \(w < y\)}
                               \end{cases}
\end{equation*}
and \(\tau(c^{0}_{w}) = \D_{J}(\mathscr{I}_{0},w) =
\SD(\mathscr{I}_{0}, w) \cup \WD_{J}(\mathscr{I}_{0},w)\), for
all~\(y,\,w\in\mathscr{I}_{0}\!\).

Assuming that \(\mathscr I\) is a sub-ideal of \((\mathscr{I}_{0},
\leq_{L})\), let \(\mathscr I' = \mathscr{I}_{0} \setminus
\mathscr I\). Throughout this section, we assume that the set \(C'
= \{c^{0}_{w} \mid w \in \mathscr I'\}\) spans an
\(\mathcal{H}\)-submodule of~\(\mathscr{S}_{0}\). Recall that this
is equivalent to saying that \(C'\) is a closed subset of
\(C_{0}\). By this assumption, the \(\mathcal H\)-module
\(\mathscr{S}' = \mathcal{A}C'\) is obtained from a \(W\!\)-graph,
namely the subgraph of \(\Gamma(C_{0})\) with \(C'\) as its vertex
set and with \(\tau\) invariants and edge weights inherited from
\(\Gamma(C_{0})\). Moreover, the subgraph of \(\Gamma(C_0)\) with
vertex set \(\{\,c_{w}^0 \mid w \in \mathscr I\,\}=C_0\setminus
C'\) and \(\tau\) invariants and edge weights inherited from
\(\Gamma(C_{0})\) is also a \(W\!\)-graph, and the corresponding
\(\mathcal{H}\)-module \(\mathscr{S}\) is isomorphic to
\(\mathscr{S}_{0}/\mathscr{S}'\). We define \(f\colon
\mathscr{S}_{0} \to \mathscr{S}\) to be the homomorphism with
kernel \(\mathscr{S}'\), and define \(c_{w} = f(c^{0}_{w})\) for
all \(w \in \mathscr I\), so that \(C=\{\,c_w\mid w\in\mathscr
I\,\}\) is the \(W\!\)-graph basis of \(\mathscr S\).

Observe that \(J \subseteq \Pos(\mathscr{I}_{0}) \subseteq
\Pos(\mathscr{I})\), and so it makes sense to ask whether
\(\mathscr I\) is a \(W\!\)-graph ideal with respect to~\(J\).
Note that the definitions immmediately imply that
\(\SD(\mathscr{I}_0,w)=\SD(\mathscr{I},w)\) and
\(\WD_J(\mathscr{I}_0,w)=\WD_J(\mathscr{I},w)\), and that
\(\WA_J(\mathscr{I}_0,w)\subseteq\WA_J(\mathscr{I},w)\), for all
\(w\in \mathscr I\). Since there may exist an \(s\in S\) with
\(sw\in\mathscr{I}_0\setminus\mathscr{I}\), it is not necessarily
the case that \(\WA_J(\mathscr{I},w)=\WA_J(\mathscr{I}_0,w)\),

For each \(w\in\mathscr I\) we define \(b_w=T_wc_1\), and we put
\(B=\{\,b_w\mid w\in\mathscr I\,\}\).
\begin{lem}\label{basis-B}
The \(\mathcal{H}\)\,-module \(\mathscr{S}\) is
\(\mathcal{A}\)\,-free with \(\mathcal{A}\)\,-basis \(B\).
\end{lem}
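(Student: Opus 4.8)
The plan is to show that $B$ is the image under $f$ of the standard basis $B_0=(\,b^{0}_{z}\mid z\in\mathscr I_0\,)$ of $\mathscr S_0$, and then that $B$ is obtained from the known $\mathcal A$-basis $C=(\,c_w\mid w\in\mathscr I\,)$ of $\mathscr S$ by a change of basis that is unitriangular for the Bruhat order; since $\mathscr S$ is already $\mathcal A$-free on $C$, this will be enough. We may assume $\mathscr I\neq\emptyset$, so that $1\in\mathscr I$ (the empty case being trivial). The first and load-bearing step is the identity $b_w=f(b^{0}_{w})$ for all $w\in\mathscr I$. Taking $w=1$ in \eqref{btoc} gives $b^{0}_{1}=c^{0}_{1}$, whence $c_1=f(c^{0}_{1})=f(b^{0}_{1})$. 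Since $b^{0}_{w}=T_wb^{0}_{1}$ (the observation recorded immediately after Definition~\ref{wgphdetelt}, applied to the $W\!$-graph ideal $\mathscr I_0$) and $f$ is an $\mathcal H$-module homomorphism, we get
\[
f(b^{0}_{w})=f(T_wb^{0}_{1})=T_wf(b^{0}_{1})=T_wc_1=b_w .
\]

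Next I would apply $f$ to the relation \eqref{btoc}. Because $\ker f=\mathscr S'=\mathcal AC'$, we have $f(c^{0}_{y})=c_y$ when $y\in\mathscr I$ and $f(c^{0}_{y})=0$ when $y\in\mathscr I'$; combined with the previous step this yields, for every $w\in\mathscr I$,
\[
b_w=c_w+\sum_{y\in\mathscr I,\ y<w}q\,q^{0}_{y,w}\,c_y .
\]
Thus the transition matrix from $C$ to $B$ is unitriangular with respect to the Bruhat order restricted to $\mathscr I$: the $c_w$-coefficient of $b_w$ is $1$, and $b_w$ is supported on $\{\,c_y\mid y\in\mathscr I,\ y\le w\,\}$.

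It remains to invert this relation. Since Bruhat intervals are finite and $1$ is the minimum of $\mathscr I$, one solves recursively to express each $c_w$ as an $\mathcal A$-linear combination of $\{\,b_y\mid y\in\mathscr I,\ y\le w\,\}$, so that $C\subseteq\mathcal AB$ and $B$ spans $\mathscr S$. For linear independence, suppose $\sum_{w\in F}a_wb_w=0$ with $F\subseteq\mathscr I$ finite and some $a_w\neq0$; choosing $v$ maximal in $F$ for the Bruhat order, no $b_w$ with $w\in F\setminus\{v\}$ contributes to the coefficient of $c_v$, so that coefficient equals $a_v$, forcing $a_v=0$ since $C$ is a basis --- a contradiction. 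Hence $B$ is an $\mathcal A$-basis of $\mathscr S$, which is therefore $\mathcal A$-free on $B$. The only mildly delicate point is this triangular inversion over the possibly infinite index set $\mathscr I$, but local finiteness of Bruhat intervals together with the lower bound $1$ makes it routine; the genuinely essential observation is the identity $b_w=f(b^{0}_{w})$.
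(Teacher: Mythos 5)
Your proof is correct and follows essentially the same route as the paper: establish $f(b^{0}_{w})=T_wc_1=b_w$, push \eqref{btoc} through $f$ to get the unitriangular relation $b_w=c_w+\sum_{y\in\mathscr I,\,y<w}qq^{0}_{y,w}c_y$, and conclude that $B$ is a basis because $C$ is. The only difference is that you spell out the triangular inversion and independence argument that the paper dismisses with ``since $C$ is an $\mathcal A$-basis \ldots\ $B$ is also an $\mathcal A$-basis.''
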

\begin{proof}
Let us compute \(f(b^{0}_{w})\) for each \(w \in \mathscr I\). We
have
\begin{equation*}
f(b^{0}_{w}) = f(T_{w}b^{0}_{1}) = T_{w}f(b^{0}_{1}) =
T_{w}f(c^{0}_{1}) = T_{w}c_{1} = T_{w}b_{1} = b_{w}
\end{equation*}
for each \(w \in \mathscr I\). It now follows from Equation
(\ref{btoc}) that for all \(w \in \mathscr I\)
\begin{align*}
b_{w} &= f(b^{0}_{w})\\
      &= f(c^{0}_{w} + \sum_{\substack{y \in \mathscr I_{0}\\y <
      w}}qq^{0}_{y,w}c^{0}_{y})\\
      &= f(c^{0}_{w}) + \sum_{\substack{y \in \mathscr I_{0}\\y <
      w}}qq^{0}_{y,w}f(c^{0}_{y})\\
      &= c_{w} +  \sum_{\substack{y \in \mathscr I\\y <
      w}}qq^{0}_{y,w}c_{y}
\end{align*}
since \(f(c^{0}_{y})=0\) for all \(y \in
\mathscr{I}_0\setminus\mathscr I\). Thus, for each \(w \in
\mathscr I\)
\begin{equation}\label{btocinI}
b_{w} = c_{w} +  \sum_{\substack{y \in \mathscr I\\y <
      w}}qq_{y,w}c_{y},
\end{equation}
where we have defined \(q_{y,w} = q^{0}_{y,w}\) whenever \(y, w
\in \mathscr I\) and \(y < w\). Now since \(C\) is an
\(\mathcal{A}\)-basis for \(\mathscr{S}\), we see that \(B\) is
also an \(\mathcal{A}\)-basis for \(\mathscr{S}\), as claimed.
\end{proof}
\begin{lem}\label{genGarnir}
For each \(w \in \mathscr I'\),
\begin{equation}\label{garnirEqu}
f(b^{0}_{w}) = \sum_{\substack{y \in \mathscr I\\y < w}}
r_{y,w}b_{y} \quad \text{for some \(r_{y,w} \in
q\mathcal{A}^{+}\)}.
\end{equation}
\end{lem}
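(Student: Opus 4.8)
The plan is to express $f(b^0_w)$ in the basis $C$ of $\mathscr S$ and then convert back to the basis $B$, keeping track of the powers of $q$ throughout. Fix $w \in \mathscr I'$. Applying $f$ to Equation~\eqref{btoc}, and using $f(c^0_y)=0$ for $y\in\mathscr I'$ together with $f(c^0_w)=0$ (since $w\in\mathscr I'$), we obtain
\begin{equation*}
f(b^0_w) = \sum_{\substack{y\in\mathscr I\\ y<w}} q\,q^0_{y,w}\,c_y.
\end{equation*}
Thus $f(b^0_w)$ already lies in the span of $\{c_y \mid y\in\mathscr I,\ y<w\}$ with all coefficients in $q\mathcal A^+$.

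Next I would use the inverse change of basis on $\mathscr S$. From Equation~\eqref{btocinI} (Lemma~\ref{basis-B}) we have $b_y = c_y + \sum_{z<y, z\in\mathscr I} q\,q_{z,y}\,c_z$, which can be inverted exactly as in Section~\ref{section3}: there are polynomials $p_{z,y}\in\mathcal A^+$ (for $z,y\in\mathscr I$, $z<y$) with $c_y = b_y - \sum_{z<y} q\,p_{z,y}\,b_z$. Substituting this into the expression for $f(b^0_w)$ gives
\begin{equation*}
f(b^0_w) = \sum_{\substack{y\in\mathscr I\\ y<w}} q\,q^0_{y,w}\Bigl(b_y - \sum_{\substack{z\in\mathscr I\\ z<y}} q\,p_{z,y}\,b_z\Bigr) = \sum_{\substack{y\in\mathscr I\\ y<w}} r_{y,w}\,b_y,
\end{equation*}
where, collecting coefficients, $r_{y,w} = q\,q^0_{y,w} - \sum_{y<x<w,\ x\in\mathscr I} q^2\,q^0_{x,w}\,p_{y,x}$ (with the convention that $q^0_{x,w}=0$ unless $y<x<w$). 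Since $q^0_{x,w}\in\mathcal A^+$ and $p_{y,x}\in\mathcal A^+$, every term on the right-hand side is visibly divisible by $q$, so $r_{y,w}\in q\mathcal A^+$ as required. Moreover the sum ranges only over $y\in\mathscr I$ with $y<w$, because the Bruhat-order constraints $y<w$ and $z<y<w$ are inherited.

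I do not anticipate a serious obstacle here: the statement is essentially a bookkeeping consequence of the two triangular change-of-basis formulas~\eqref{btoc} and~\eqref{btocinI}, once one observes that $f$ kills $c^0_w$ for $w\in\mathscr I'$ and is $\mathcal H$-linear. The only mild point requiring care is the claim that every coefficient that appears lies in $q\mathcal A^+$ rather than merely in $\mathcal A$; this follows because each original coefficient $q\,q^0_{y,w}$ in~\eqref{btoc} carries an explicit factor of $q$ and the $p_{y,x}$ produced by inverting~\eqref{btocinI} lie in $\mathcal A^+$, so no negative powers of $q$ are introduced. It is worth noting that this lemma is exactly the "Garnir-type" relation alluded to by the label \texttt{garnirEqu}: it expresses the image under $f$ of a basis vector indexed outside $\mathscr I$ as a $q\mathcal A^+$-combination of basis vectors indexed inside $\mathscr I$, which is the key input needed to verify that $\mathscr I$ satisfies Definition~\ref{wgphdetelt}(i) for the module $\mathscr S$.
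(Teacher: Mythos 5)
Your argument is correct, and it reaches the conclusion by a genuinely different route from the paper. The paper starts from the expansion \eqref{ctob} of \(c^0_w\) in terms of the \(b^0_y\), applies \(f\), and is then left with terms \(f(b^0_y)\) for \(y\in\mathscr I'\) with \(y<w\); it disposes of these by induction on \(l(w)\) over \(\mathscr I'\), which produces the recursion \(r_{y,w}=qp^0_{y,w}+\sum_{x\in\mathscr I',\,y<x<w}qp^0_{x,w}r_{y,x}\). You instead apply \(f\) to the forward expansion \eqref{btoc}, which kills every \(\mathscr I'\)-indexed term at once (no induction needed), and then invert the unitriangular change of basis \eqref{btocinI} inside \(\mathscr S\) to return from the \(c_y\) to the \(b_y\). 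Both are triangularity arguments and both visibly keep every coefficient in \(q\mathcal A^+\); your version is slightly more economical. Two small remarks. First, the polynomials \(p_{z,y}\) you obtain by inverting \eqref{btocinI} are defined by a recursion summing over \(x\in\mathscr I\) only, so they need not coincide with the restrictions of the \(p^0_{z,y}\) to \(\mathscr I\) --- this does not affect your proof, since all you need is \(p_{z,y}\in\mathcal A^+\), but it is worth being aware of. Second, the paper's inductive proof is not gratuitous: the specific recursive formula it yields for \(r_{y,w}\) is recorded in Corollary~\ref{rywpyw} and used afterwards (for instance to see that \(r_{y,w}=q\) when \(y=sw<w\), and in the weak-ascent case of Theorem~\ref{wgdetset}). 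Your closed-form expression \(r_{y,w}=qq^0_{y,w}-\sum_{x\in\mathscr I,\,y<x<w}q^2q^0_{x,w}p_{y,x}\) necessarily agrees with it and also gives \(r_{y,w}=q\) in that special case, but if you wanted to reproduce the paper's later arguments verbatim you would need to reconcile the two formulas.
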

\begin{proof}
We have, for each \(w \in \mathscr I'\),
\begin{align}
0 = f(c^{0}_{w}) &= f(b^{0}_{w} - \sum_{\substack{y \in \mathscr
I_{0}\\y < w}} qp^{0}_{y,w}b^{0}_{y})\nonumber\\
                 &= f(b^{0}_{w}) - \sum_{\substack{y \in \mathscr I_{0}\\y <
w}} qp^{0}_{y,w}f(b^{0}_{y})\nonumber\\
                 &= f(b^{0}_{w}) - \sum_{\substack{y \in \mathscr I'\\y <
w}} qp^{0}_{y,w}f(b^{0}_{y}) - \sum_{\substack{y \in \mathscr I\\y
< w}} qp^{0}_{y,w}b_{y}\label{genGarEqu}.
\end{align}
Now by rearranging terms, Equation (\ref{genGarEqu}) becomes
\begin{equation}\label{genGarEquRearr}
f(b^{0}_{w}) = \sum_{\substack{y \in \mathscr I'\\y < w}}
qp^{0}_{y,w}f(b^{0}_{y}) + \sum_{\substack{y \in \mathscr I\\y <
w}} qp^{0}_{y,w}b_{y}
\end{equation}
for each \(w \in \mathscr I'\). We now use induction on \(l(w)\)
to show that Equation (\ref{garnirEqu}) holds for all \(w \in
\mathscr I'\).

Suppose first that \(w\) is of minimal length subject to \(w \in
\mathscr I'\). Equation (\ref{genGarEquRearr}) gives
\begin{equation*}
f(b^{0}_{w}) = \sum_{\substack{y \in \mathscr I\\y < w}}
qp^{0}_{y,w}b_{y}
\end{equation*}
since minimality of \(w\) implies that the set \(\{\,y \in
\mathscr I' \mid y < w\,\}\) is empty. Thus \( f(b^{0}_{w}) =
\sum_{\substack{y \in \mathscr I\\y < w}} r_{y,w}b_{y} \) where
\(r_{y,w} = qp^{0}_{y,w} \in q\mathcal{A}^{+}\), as required.

Now let \(w\in\mathscr I'\) be arbitrary and assume that the
result holds for all \(y \in \mathscr I'\) such that \(l(y) <
l(w)\); that is, assume that
\begin{equation}\label{genGarAss}
f(b^{0}_{y}) = \sum_{\substack{x \in \mathscr I\\x < y}}
r_{x,y}b_{x} \quad \text{for some $r_{x,y} \in q\mathcal{A}^{+}$}.
\end{equation}
Equation (\ref{genGarEquRearr}) and Equation (\ref{genGarAss})
give
\begin{align*}
f(b^{0}_{w}) &= \sum_{\substack{y \in \mathscr I'\\ y < w}}
qp^{0}_{y,w}(\sum_{\substack{x \in \mathscr I\\ x < y}}
r_{x,y}b_{x})
+ \sum_{\substack{y \in \mathscr I\\y < w}} qp^{0}_{y,w}b_{y}\\
    &= \sum_{\substack{x \in \mathscr I\\x<w}}\Bigl(\sum_{\substack{y \in \mathscr
    I'\\ x<y<w}}qp^{0}_{y,w}r_{x,y}\Bigr)b_{x} + \sum_{\substack{x \in \mathscr I\\ x < w}} qp^{0}_{x,w}b_{x}\\
    &= \sum_{\substack{x \in \mathscr I\\ x<w}}\Bigl(qp^{0}_{x,w} + \sum_{\substack{y \in \mathscr
    I'\\ x<y<w}}qp^{0}_{y,w}r_{x,y}\Bigr)b_{x}.
\end{align*}
It follows that
\begin{equation*}
f(b^{0}_{w}) = \sum_{\substack{y \in \mathscr I\\ y < w}}
r_{y,w}b_{y},
\end{equation*}
where \(r_{y,w} = qp^{0}_{y,w} + \sum_{\substack{x \in \mathscr
             I'\\ y<x<w}}qp^{0}_{x,w}r_{y,x} \in
q\mathcal{A}^{+}\), and we are done.
\end{proof}
\begin{rem}\label{garnirRem}
In the expression for \(r_{y,w}\) in the proof above, we see that
if \(y = sw < w\) then \(r_{y,w} = q\), since \(\{\,x\mid
y<x<w\,\} = \emptyset\) and \(p^{0}_{y,w} = q^{0}_{y,w} = 1\).
\end{rem}

For future reference, we state the formula for the coefficients
\(r_{y,w}\) in the proof above in the following corollary (minding
Remark \ref{garnirRem}).
\begin{cor}\label{rywpyw}
For each \(w \in \mathscr I'\) and \(y \in \mathscr I\), the
coefficients appearing in Equation~(\ref{garnirEqu}) are given by
\begin{equation}\label{ryw}
r_{y,w} =  qp^{0}_{y,w}+\sum_{\substack{x \in \mathscr
             I'\\y<x<w}}qp^{0}_{x,w}r_{y,x} \in q\mathcal{A}^{+}.
\end{equation}
In particular, \(r_{y,w} = q\) if \(y = sw < w\).
\end{cor}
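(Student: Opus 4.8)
The plan is to extract the formula for \(r_{y,w}\) directly from the proof of Lemma~\ref{genGarnir}, since \eqref{ryw} is nothing other than the recursion derived in the course of that argument. First I would revisit the base case of the induction in that proof, where \(w\) has minimal length subject to \(w\in\mathscr I'\): there the identity read \(f(b^{0}_{w})=\sum_{y\in\mathscr I,\,y<w}qp^{0}_{y,w}b_{y}\), that is, \(r_{y,w}=qp^{0}_{y,w}\), and I would observe that this is exactly \eqref{ryw} in the degenerate case where \(\{\,x\in\mathscr I'\mid y<x<w\,\}=\emptyset\), so that the correction sum is vacuous. Next I would revisit the inductive step, which produced \(r_{y,w}=qp^{0}_{y,w}+\sum_{x\in\mathscr I',\,y<x<w}qp^{0}_{x,w}r_{y,x}\); this is precisely \eqref{ryw}. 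For the membership \(r_{y,w}\in q\mathcal A^{+}\) I would again point to that same proof: \(p^{0}_{x,w}\in\mathcal A^{+}\) for each \(x<w\), and \(r_{y,x}\in q\mathcal A^{+}\) for each relevant \(x\) by the inductive hypothesis, so every term on the right-hand side of \eqref{ryw} lies in \(q\mathcal A^{+}\), and hence so does \(r_{y,w}\).

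For the ``in particular'' clause I would simply appeal to Remark~\ref{garnirRem}: when \(y=sw<w\) the interval \(\{\,x\in W\mid y<x<w\,\}\) is empty, since \(w\) covers \(sw\) in the Bruhat order (the two elements differ by left multiplication by \(s\) and \(l(w)=l(sw)+1\)), and \(p^{0}_{y,w}=q^{0}_{y,w}=1\); thus the sum in \eqref{ryw} is vacuous and the formula collapses to \(r_{y,w}=q\).

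I do not anticipate any genuine obstacle here, since the corollary is essentially a bookkeeping restatement of formulas already obtained while proving Lemma~\ref{genGarnir}. The only point that merits a line of explanation is the unification of the base case and the inductive step into the single expression \eqref{ryw}, which is legitimate thanks to the convention that an empty sum is zero.
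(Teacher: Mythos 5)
Your proposal is correct and matches the paper exactly: the paper offers no separate proof, stating only that the corollary records the formula for \(r_{y,w}\) obtained in the proof of Lemma~\ref{genGarnir} (minding Remark~\ref{garnirRem}), which is precisely the bookkeeping you carry out. Your observation that the base case is subsumed in \eqref{ryw} via an empty correction sum, and your justification of the ``in particular'' clause via the emptiness of the Bruhat interval and \(p^{0}_{y,w}=q^{0}_{y,w}=1\), are both accurate.
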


We now prove the first main result of the paper.
\begin{thr}\label{wgdetset}
Let \(\mathscr{I}_{0}\) be a \(W\!\)-graph ideal with respect to
\(J \subseteq \Pos(\mathscr{I}_{0})\) and let \(C_{0} =
\{\,c^{0}_{w} \mid w \in \mathscr{I}_{0}\,\}\) be the
\(W\!\)-graph basis of the module
\(\mathscr{S}_0=\mathscr{S}(\mathscr{I}_{0},J)\). Suppose that
\(\mathscr I\) is a sub-ideal of \(\mathscr{I}_{0}\) such that
\(\{\,c^{0}_{w} \mid w \in \mathscr I_{0} \setminus \mathscr
I\,\}\) is a closed subset of \(C_{0}\). Then \(\mathscr I\) is a
\(W\!\)-graph ideal with respect to \(J\). Moreover, the
corresponding \(W\!\)-graph is isomorphic to the  full subgraph of
\(\Gamma(\mathscr{I}_0)\) on the vertex set \(\{\,c_w^0\mid
w\in\mathscr I\,\}\subseteq C_0\), with \(\tau\) and \(\mu\)
functions inherited from \(\Gamma(\mathscr{I}_{0})\).
\end{thr}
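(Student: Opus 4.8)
The plan is to verify directly that $\mathscr I$ and $J$ satisfy the two clauses of Definition~\ref{wgphdetelt}, taking for the module the quotient $\mathscr S=\mathscr S_0/\mathscr S'$ and for the basis the set $B=(\,b_w\mid w\in\mathscr I\,)$ supplied by Lemma~\ref{basis-B}. The raw material is already assembled: $b_w=f(b^0_w)$ for $w\in\mathscr I$, Equation~\eqref{btocinI}, and the ``Garnir'' expansion of Lemma~\ref{genGarnir} and Corollary~\ref{rywpyw}, namely $f(b^0_w)=\sum_{y\in\mathscr I,\,y<w}r_{y,w}b_y$ for $w\in\mathscr I'$, with every $r_{y,w}\in q\mathcal A^+$ and with $r_{sw,w}=q$ whenever $sw<w$. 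Thus clause~(i) reduces to computing $T_sb_w=f(T_sb^0_w)$ for all $w\in\mathscr I$ and $s\in S$ from the $\mathscr S_0$-action, and checking that the result has the shape prescribed by \eqref{S_0action}, with coefficients in $q\mathcal A^+$.

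First I would split into the four cases according to which of $\SA(\mathscr I,w)$, $\SD(\mathscr I,w)$, $\WD_J(\mathscr I,w)$, $\WA_J(\mathscr I,w)$ contains $s$, using that $\SD$ and $\WD_J$ are the same for $\mathscr I$ and $\mathscr I_0$, that $\SA(\mathscr I,w)\subseteq\SA(\mathscr I_0,w)$, and that $\WA_J(\mathscr I,w)=\WA_J(\mathscr I_0,w)\sqcup\{\,s:sw>w,\ sw\in\mathscr I'\,\}$. If $s\in\SD(\mathscr I,w)$ then $sw<w$, hence $sw\in\mathscr I$ since $\mathscr I$ is an ideal, and applying $f$ to $T_sb^0_w=b^0_{sw}+(q-q^{-1})b^0_w$ gives $b_{sw}+(q-q^{-1})b_w$; the case $s\in\WD_J(\mathscr I,w)$ is immediate; and if $s\in\SA(\mathscr I,w)$ then $sw\in\mathscr I$, so $T_sb_w=f(b^0_{sw})=b_{sw}$. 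The substantive case is $s\in\WA_J(\mathscr I,w)$, which subdivides. When $s\in\WA_J(\mathscr I_0,w)$ I apply $f$ to $T_sb^0_w=qb^0_w-\sum_{y\in\mathscr I_0,\,y<sw}r^{0,s}_{y,w}b^0_y$, substitute the Garnir expansion for each $f(b^0_y)$ with $y\in\mathscr I'$, and collect, obtaining $T_sb_w=qb_w-\sum_{y\in\mathscr I,\,y<sw}r^s_{y,w}b_y$ with $r^s_{y,w}=r^{0,s}_{y,w}+\sum_{x\in\mathscr I',\,y<x<sw}r^{0,s}_{x,w}r_{y,x}\in q\mathcal A^+$. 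When instead $sw\in\mathscr I'$, then $s\in\SA(\mathscr I_0,w)$ and $T_sb^0_w=b^0_{sw}$, so $T_sb_w=f(b^0_{sw})=\sum_{y\in\mathscr I,\,y<sw}r_{y,sw}b_y$; since $r_{w,sw}=q$ by Corollary~\ref{rywpyw}, this reads $qb_w+\sum_{y\ne w}r_{y,sw}b_y$, i.e.\ $qb_w-\sum_{y\in\mathscr I,\,y<sw}r^s_{y,w}b_y$ with $r^s_{w,w}=0$ and $r^s_{y,w}=-r_{y,sw}$ for $y\ne w$. Because $\mathcal A^+=\mathbb Z[q]$, the set $q\mathcal A^+$ is stable under negation, so all the coefficients $r^s_{y,w}$ produced lie in $q\mathcal A^+$; this establishes clause~(i).

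For clause~(ii) I would observe that the semilinear involution of $\mathscr S_0$ fixes each $c^0_w$ (Lemma~\ref{uniCbasis1}), hence preserves $\mathscr S'=\mathcal A C'$, and therefore descends to an $\mathcal A$-semilinear involution of $\mathscr S=\mathscr S_0/\mathscr S'$; it inherits the property $\overline{h\alpha}=\overline{h}\,\overline{\alpha}$ and it fixes $b_1=c_1=f(c^0_1)$. Thus $\mathscr I$ is a $W\!$-graph ideal with respect to $J$. For the final assertion I would appeal to the uniqueness in Lemma~\ref{uniCbasis1}: the elements $c_w=f(c^0_w)$ are bar-invariant and, by \eqref{btocinI}, satisfy $b_w=c_w+q\sum_{y<w}q^0_{y,w}c_y$, so they form the canonical $W\!$-graph basis of $\mathscr S$, with $q_{y,w}=q^0_{y,w}$ and hence $\mu_{y,w}=\mu^0_{y,w}$; moreover $\D_J(\mathscr I,w)=\SD(\mathscr I_0,w)\cup\WD_J(\mathscr I_0,w)=\D_J(\mathscr I_0,w)$ for $w\in\mathscr I$. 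Consequently $c_w\mapsto c^0_w$ is an isomorphism from the $W\!$-graph attached to $\mathscr I$ onto the full subgraph of $\Gamma(\mathscr I_0)$ on $\{\,c^0_w\mid w\in\mathscr I\,\}$, with $\tau$ and $\mu$ inherited.

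The main obstacle is the second half of the weak-ascent case, where $s$ is a strong ascent relative to $\mathscr I_0$ but $sw$ falls outside $\mathscr I$: there the correct leading term $qb_w$ together with the membership of the remaining coefficients in $q\mathcal A^+$ rest entirely on the Garnir expansion of Lemma~\ref{genGarnir} and the normalization $r_{w,sw}=q$, and this is precisely the point at which the hypothesis that $\{\,c^0_w\mid w\in\mathscr I_0\setminus\mathscr I\,\}$ is a closed subset of $C_0$ is used.
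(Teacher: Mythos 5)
Your proposal is correct and follows essentially the same route as the paper: quotient module $\mathscr S=\mathscr S_0/\mathscr S'$ with the basis from Lemma~\ref{basis-B}, the same four-case analysis of $T_sb_w=f(T_sb^0_w)$ with the weak-ascent case subdivided according to whether $sw\in\mathscr I_0\setminus\mathscr I$ or $sw\notin\mathscr I_0$, and the Garnir expansion of Lemma~\ref{genGarnir} together with $r_{w,sw}=q$ from Corollary~\ref{rywpyw} doing the real work. Your handling of the $y=w$ term (setting $r^s_{w,w}=0$ explicitly) and your derivation of the involution by descent from $\mathscr S_0$ are minor refinements of the paper's slightly terser wording, not a different argument.
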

\begin{proof}
We need to verify that the ideal \(\mathscr I\) satisfies the
hypotheses required in Definition \ref{wgphdetelt}. All we need to
show is that the \(\mathcal{H}\)-module \(\mathscr S\) as
constructed above satisfies the required conditions. By Lemma
\ref{basis-B}, \(\mathscr S\) is \(\mathcal{A}\)-free with a free
\(\mathcal A\)-basis given by \(B = \{\,b_{w} \mid w \in \mathscr
I\,\}\), where \(b_{w} = f(b^{0}_{w})\) for each \(w \in \mathscr
I\). (Recall that \(f\) is natural homomorphism from \(\mathscr
S_{0}\) onto \(\mathscr S\).) To complete the verification of
Condition (i) in Definition \ref{wgphdetelt}, we proceed to work
out how the generators \(T_{s}\) act on the basis
elements~\(b_{w}\).

Let \(w \in \mathscr I\) and let \(s \in S\). If \(s \in
\SA(\mathscr{I},w)\) then \(w<sw\in\mathscr I\subseteq\mathscr
I_0\), whence \(s \in \SA(\mathscr{I}_{0},w)\), and Equation
(\ref{S_0action}) for \(\mathscr S_0\) gives
\(T_{s}b^{0}_{w}=b^{0}_{sw}\). So
\begin{equation*}
T_{s}b_{w} = T_{s}f(b^{0}_{w}) = f(T_{s}b^{0}_{w}) = f(b^{0}_{sw})
= b_{sw}
\end{equation*}
in accordance with the requirements of
Definition~\ref{wgphdetelt}~(i). If \(s \in \SD(\mathscr{I},w)\)
then \(w>sw\), whence \(s\in\SD(\mathscr{I}_{0},w)\), and
\(T_{s}b^{0}_{w}=b^{0}_{sw} + (q-q^{-1})b^{0}_{w}\) by
Equation~(\ref{S_0action}) for \(\mathscr S_0\). So
\begin{equation*}
T_{s}b_{w} = T_{s}f(b^{0}_{w}) = f(T_{s}b^{0}_{w}) = f(b^{0}_{sw}
+ (q-q^{-1})b^{0}_{w}) = b_{sw} + (q-q^{-1})b_{w},
\end{equation*}
again in accordance with the requirements of
Definition~\ref{wgphdetelt}~(i). If \(s \in
\WD_{J}(\mathscr{I},w)\) then \(sw=wt\) for some \(t\in J\),
whence \(s\in\WD_{J}(\mathscr{I}_{0},w)\), and Equation
(\ref{S_0action}) for \(\mathscr S_0\) gives
\(T_{s}b^{0}_{w}=-q^{-1}b^{0}_{sw}\). So
\begin{equation*}
T_{s}b_{w} = T_{s}f(b^{0}_{w}) = f(T_{s}b^{0}_{w}) =
f(-q^{-1}b^{0}_{w}) = -q^{-1}b_{w},
\end{equation*}
and again the requirements of Definition~\ref{wgphdetelt}~(i) are
satisfied.

Finally, suppose that \(s \in \WA_{J}(\mathscr{I},w)\), so that
\(w<sw\notin\mathscr I\). It follows that either \(s \in
\SA_{J}(\mathscr{I}_{0},w)\) or
\(s\in\WA_{J}(\mathscr{I}_{0},w)\), depending on whether
\(sw\in\mathscr I_0\setminus\mathscr I\) or \(sw\notin\mathscr
I_0\). In the former case Equation (\ref{S_0action}) for
\(\mathscr S_0\) gives \(T_{s}b^{0}_{w}=b^{0}_{sw}\), and so
\begin{equation}\label{actsinSAGen}
T_{s}b_{w} = T_{s}f(b^{0}_{w}) = f(T_{s}b^{0}_{w}) = f(b^{0}_{sw})
= qb_{w} - \sum_{\substack{y \in \mathscr I\\y
<sw}}r^{s}_{y,w}b_{y},
\end{equation}
where \(r^{s}_{y,w} = -r_{y,sw} \in q\mathcal{A}^{+}\), by Lemma
\ref{genGarnir} and Corollary \ref{rywpyw}. On the other hand, if
\(s \in \WA_{J}(\mathscr{I}_{0},w)\), then, by Equation
(\ref{S_0action}) for \(\mathscr S_0\) and Lemma \ref{genGarnir},
\begin{align*}
T_{s}b_{w} &= T_{s}f(b^{0}_{w}) = f(T_{s}b^{0}_{w}) = f(qb^{0}_{w} - \sum_{\substack{y \in \mathscr{I}_{0}\\y<sw}}r^{0s}_{y,w}b^{0}_{y})\\
&= qb_{w} - \sum_{\substack{y \in \mathscr{I}\\y<sw}}r^{0s}_{y,w}b_{y} - \sum_{\substack{y \in \mathscr{I}'\\y<sw}}r^{0s}_{y,w}f(b^{0}_{y})\\
&= qb_{w} - \sum_{\substack{y \in \mathscr{I}\\y<sw}}r^{0s}_{y,w}b_{y} - \sum_{\substack{y \in \mathscr{I}'\\y<sw}}r^{0s}_{y,w}\Bigl(\sum_{\substack{x \in \mathscr{I}\\x<y}}r_{x,y}b_{x}\Bigr)\\
&= qb_{w} - \sum_{\substack{y \in \mathscr{I}\\y<sw}}r^{0s}_{y,w}b_{y} - \sum_{\substack{x \in \mathscr{I}\\x<sw}}\Bigl(\sum_{\substack{y \in \mathscr{I}'\\x<y<sw}}r^{0s}_{y,w}r_{x,y}\Bigr)b_{x}\\
&= qb_{w} - \sum_{\substack{y \in
\mathscr{I}\\y<sw}}\Bigl(r^{0s}_{y,w}+\sum_{\substack{x \in
\mathscr{I}'\\y<x<sw}}r^{0s}_{x,w}r_{y,x}\Bigr)b_{y}.
\end{align*}
Thus we have shown that
\begin{equation}\label{actsinWAGen}
T_{s}b_{w}= qb_{w} - \sum_{\substack{y \in \mathscr
I\\y<sw}}r^{s}_{y,w}b_{y}
\end{equation}
where \(r^{s}_{y,w} = r^{0s}_{y,w}+\sum r^{0s}_{x,w}r_{y,x}\), and
\(r^{s}_{y,w}\in q\mathcal{A}^{+}\) by Definition \ref{wgphdetelt}
and Corollary~\ref{rywpyw}. Hence in either case the requirements
of Definition~\ref{wgphdetelt}~(i) are satisfied.

The second assertion of the theorem is obviously satisfied, by the
way we defined the \(\mathcal{H}\)-module~\(\mathscr S\). This
also ensures that Condition (ii) of Definition \ref{wgphdetelt}
holds, since, as we observed in Section~\ref{Cox}, every module
arising from a \(W\!\)-graph admits a semilinear involution
\(\alpha\mapsto\overline\alpha\) that fixes the elements of the
\(W\!\)-graph basis and satisfies
\(\overline{h\alpha}=\overline{h}\overline\alpha\) for all \(h \in
\mathcal H\).
\end{proof}

Let \(\mathscr{I}\) be a \(W\!\)-graph ideal with respect to \(J
\subseteq \Pos(\mathscr{I})\) and let \(C = \{\,c_{w} \mid w \in
\mathscr I\,\}\) be the corresponding \(W\!\)-graph basis of the
module \(\mathscr S(\mathscr I,J)\). Let \(\mathscr{C}\) be the
set of cells of \(\Gamma = \Gamma(C)\). We have the following
result.

\begin{lem}\label{mincell}
Let \(\mathcal{C}_1\in\mathscr C\) be the cell that contains
\(c_{1}\). Then \(\mathcal{C}_1\) is the unique maximal element of
\((\mathscr{C},\leq_{\Gamma})\).
\end{lem}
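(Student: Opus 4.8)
The plan is to show that every vertex $c_w$ lies below $c_1$ in the preorder $\leq_\Gamma$, and then to observe that $c_1$ genuinely lies in a cell that is a single equivalence class, so that $\mathcal C_1$ dominates every other cell. The first part is essentially already done: for any $w\in\mathscr I$ we have $1\leq_L w$, and so Corollary~\ref{edgeup}(i) (applied with $x=1$ and $y=w$) gives $c_w\leq_{\Gamma}c_1$. Since this holds for \emph{every} $w\in\mathscr I$, in particular it holds for every $w$ with $c_w$ in a given cell $\mathcal C$, and therefore $\mathcal C\leq_\Gamma\mathcal C_1$ in the induced partial order on cells. Thus $\mathcal C_1$ is an upper bound for $\mathscr C$.

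What remains is to check that $\mathcal C_1$ is itself a maximal \emph{element} — i.e.\ that there is no cell strictly above it — which is immediate once we know $\mathcal C_1$ is an upper bound: if $\mathcal C_1\leq_\Gamma\mathcal C'$ then combining with $\mathcal C'\leq_\Gamma\mathcal C_1$ gives $\mathcal C'\sim_\Gamma\mathcal C_1$, so $\mathcal C'=\mathcal C_1$. Hence $\mathcal C_1$ is the unique maximal element, and the proof is essentially a two-line deduction from Corollary~\ref{edgeup}(i) together with the definition of the partial order on cells.

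I do not anticipate a serious obstacle here; the only point that warrants a sentence of care is making sure that "maximal" is being used in the sense of the partial order $\leq_\Gamma$ on $\mathscr C$ (so that "unique maximal" really does mean "a top element," which is what the upper-bound argument delivers), and that the reduction from the vertex preorder to the cell partial order is the one recorded in Section~\ref{Cox}. A natural way to write it: fix an arbitrary cell $\mathcal C$ and pick $c_w\in\mathcal C$; then $1\leq_L w$ forces $c_w\leq_\Gamma c_1$ by Corollary~\ref{edgeup}(i); by definition of the ordering on cells this says $\mathcal C\leq_\Gamma\mathcal C_1$; as $\mathcal C$ was arbitrary, $\mathcal C_1$ is the greatest element of $(\mathscr C,\leq_\Gamma)$, hence in particular its unique maximal element.
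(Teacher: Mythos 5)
Your argument is correct and is exactly the paper's own proof: the paper deduces the lemma from Corollary~\ref{edgeup}(i) together with the fact that \(1\leq_L w\) for all \(w\in\mathscr I\), which is precisely the two-line deduction you spell out. The extra care you take in passing from the vertex preorder to the cell order and from "greatest" to "unique maximal" is fine but adds nothing beyond what the paper leaves implicit.
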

\begin{proof}
The result follows readily from Corollary \ref{edgeup}~(i) and the
fact that \(1 \leq_{L} w\) for all \(w \in \mathscr{I}\).
\end{proof}

For every \(D\subseteq C\) define \(\mathscr I_D=\{\,w\in\mathscr
I\mid c_w\in D\,\}\), and for each cell \(\mathcal C\in\mathscr
C\) define \(\overline{\mathcal C}\) to be the union of all
\(\mathcal D\in\mathscr C\) such that \(\mathcal
C\le_\Gamma\mathcal D\). Note that Lemma~\ref{mincell} tells us
that \(\mathcal C_1\subseteq\overline{\mathcal C}\) and hence that
\(1\in\mathscr I_{\,\overline{\mathcal C}}\).

\begin{lem}\label{leftidealIk}
For each \(\mathcal C\in\mathscr C\) the sets
\(\mathscr{I}_{\,\overline{\mathcal C}}\) and
\(\mathscr{I}_{\,\overline{\mathcal C}\setminus\mathcal C}\) are
ideals of \((W,\leq_{L})\).
\end{lem}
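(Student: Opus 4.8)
The plan is to show that each of the two sets is closed under taking suffixes in the left weak order, which is precisely the definition of an ideal in $(W,\leq_L)$. The key tool is Corollary~\ref{edgeup}: part~(i) says that $x\leq_L y$ implies $c_y\leq_\Gamma c_x$, and part~(ii) says that any left-weak-order interval between two elements of a cell stays inside that cell's index set. I will use the first of these for $\mathscr{I}_{\,\overline{\mathcal C}}$ and the second, combined with the ideal structure of $\mathscr I$ itself, for $\mathscr{I}_{\,\overline{\mathcal C}\setminus\mathcal C}$.

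First I would handle $\mathscr{I}_{\,\overline{\mathcal C}}$. Let $w\in\mathscr{I}_{\,\overline{\mathcal C}}$ and suppose $u\leq_L w$; since $\mathscr I$ is an ideal of $(W,\leq_L)$ we have $u\in\mathscr I$, so $c_u$ is defined. By Corollary~\ref{edgeup}~(i), $c_w\leq_\Gamma c_u$, i.e.\ the cell $\mathcal D_w$ containing $c_w$ satisfies $\mathcal D_w\leq_\Gamma\mathcal D_u$, where $\mathcal D_u$ is the cell of $c_u$. But $w\in\mathscr{I}_{\,\overline{\mathcal C}}$ means $\mathcal C\leq_\Gamma\mathcal D_w$, and transitivity of $\leq_\Gamma$ then gives $\mathcal C\leq_\Gamma\mathcal D_u$, so $c_u\in\overline{\mathcal C}$ and $u\in\mathscr{I}_{\,\overline{\mathcal C}}$. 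Hence $\mathscr{I}_{\,\overline{\mathcal C}}$ is an ideal.

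Next I would treat $\mathscr{I}_{\,\overline{\mathcal C}\setminus\mathcal C}$, which consists of those $w\in\mathscr I$ whose cell $\mathcal D_w$ satisfies $\mathcal C\leq_\Gamma\mathcal D_w$ but $\mathcal D_w\neq\mathcal C$ (equivalently $\mathcal D_w\not\leq_\Gamma\mathcal C$, using that $\mathcal C\leq_\Gamma\mathcal D_w$ already). Let such a $w$ be given and let $u\leq_L w$. As before $u\in\mathscr I$ and, by the previous paragraph's argument, $u\in\mathscr{I}_{\,\overline{\mathcal C}}$; the only thing left to rule out is $c_u\in\mathcal C$. Suppose for contradiction that $c_u\in\mathcal C$. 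Pick any $c_v\in\mathcal C$ with $v\leq_L u$ (for instance, if $1\in\mathscr I_{\mathcal C}$ one can take $v=1$, but in general one argues differently) --- actually the cleaner route is: since $c_u\in\mathcal C$ and $\mathcal C\leq_\Gamma\mathcal D_w$ with $w\in\mathscr I$, and $u\leq_L w$, Corollary~\ref{edgeup}~(i) gives $c_w\leq_\Gamma c_u$; as $c_u\in\mathcal C$ this yields $\mathcal D_w\leq_\Gamma\mathcal C$, forcing $\mathcal D_w=\mathcal C$ (by antisymmetry of $\leq_\Gamma$ on cells together with $\mathcal C\leq_\Gamma\mathcal D_w$), contradicting $w\in\mathscr{I}_{\,\overline{\mathcal C}\setminus\mathcal C}$. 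Therefore $c_u\notin\mathcal C$, so $u\in\mathscr{I}_{\,\overline{\mathcal C}\setminus\mathcal C}$, and this set is an ideal as well.

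The main obstacle, such as it is, is bookkeeping: making sure the passage between the preorder $\leq_\Gamma$ on vertices and the induced partial order on cells is used correctly, and in particular invoking antisymmetry of the latter (cells are $\sim_\Gamma$-classes, so $\leq_\Gamma$ is genuinely a partial order on $\mathscr C$) at the right moment. Once that is in place, both claims reduce to a one-line application of Corollary~\ref{edgeup}~(i) plus the hypothesis that $\mathscr I$ is already an ideal, so the proof is short. I do not expect to need Corollary~\ref{edgeup}~(ii) after all; the chain-of-cells argument via part~(i) suffices for both sets.
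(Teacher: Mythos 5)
Your argument is correct and follows essentially the same route as the paper: the paper likewise deduces both claims from Corollary~\ref{edgeup}~(i) together with transitivity of \(\leq_\Gamma\) on cells, handling the first set explicitly and dismissing the second as ``similar'' (your use of antisymmetry of the induced partial order on cells is a valid way to fill in that ``similar'' step). The only cosmetic difference is that the paper reduces to the one-step case \(sw<_Lw\) while you treat a general suffix \(u\leq_Lw\) directly; both are fine since Corollary~\ref{edgeup}~(i) is stated for arbitrary \(x\leq_Ly\).
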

\begin{proof}
To show that \(\mathscr{I}_{\,\overline{\mathcal C}}\) is an ideal
it suffices to show that if \(w \in
\mathscr{I}_{\,\overline{\mathcal C}}\) and \(s\in S\) with \(sw
<_{L} w\), then \(sw \in \mathscr{I}_{\,\overline{\mathcal C}}\).
Let \(\mathcal{D}\) and \(\mathcal{D}'\) be the cells that contain
\(c_{w}\) and \(c_{sw}\) respectively. Since \(sw <_{L} w\) it
follows from Corollary \ref{edgeup}~(i) that \(\mathcal{D}'
\geq_{\Gamma} \mathcal{D}\). But \(\mathcal{D}\geq_{\Gamma}
\mathcal{C}\) since \(w \in \mathscr{I}_{\,\overline{\mathcal
C}}\); so \(\mathcal{D}'\geq_{\Gamma} \mathcal{C}\). Hence
\(c_{sw}\in \overline{\mathcal C}\), so that \(sw \in
\mathscr{I}_{\,\overline{\mathcal C}}\), as desired.

The proof of the other part is similar.
\end{proof}

\begin{lem}\label{backclosed}
Let \(\mathcal C \in\mathscr C\). Then \(\overline{\mathcal C}{}'
= C \setminus \overline{\mathcal C}\) and \(\overline{\mathcal
C}{}'\cup\mathcal C\) are closed subsets of~\(C\).
\end{lem}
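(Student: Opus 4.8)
The goal is to show that both $\overline{\mathcal C}{}' = C\setminus\overline{\mathcal C}$ and $\overline{\mathcal C}{}'\cup\mathcal C$ are closed subsets of $C$, where recall that a subset $U\subseteq C$ is closed precisely when $u\in U$ and $v\leftarrow_\Gamma u$ imply $v\in U$; equivalently, $C\setminus U$ is closed under the relation "$\ge_\Gamma$" in the sense that if $v\in C\setminus U$ and $u\leftarrow_\Gamma v$ then $u\in C\setminus U$. The plan is to translate everything into the cell preorder $\le_\Gamma$ on $\mathscr C$ and use the definition of $\overline{\mathcal C}$ as an up-set.

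First I would prove that $\overline{\mathcal C}$ is an up-set for $\leftarrow_\Gamma$ in the opposite direction, i.e.\ that $\overline{\mathcal C}{}' = C\setminus\overline{\mathcal C}$ is closed. Suppose $u\in\overline{\mathcal C}{}'$ and $v\leftarrow_\Gamma u$; I must show $v\in\overline{\mathcal C}{}'$. Let $\mathcal D,\mathcal D'$ be the cells containing $u,v$ respectively. From $v\leftarrow_\Gamma u$ we get $v\le_\Gamma u$, hence $\mathcal D'\le_\Gamma\mathcal D$ at the level of cells. If $v\notin\overline{\mathcal C}{}'$, i.e.\ $v\in\overline{\mathcal C}$, then $\mathcal D'\ge_\Gamma\mathcal C$ by definition of $\overline{\mathcal C}$; combined with $\mathcal D\ge_\Gamma\mathcal D'$ this forces $\mathcal D\ge_\Gamma\mathcal C$, whence $u\in\overline{\mathcal C}$, contradicting $u\in\overline{\mathcal C}{}'$. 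So $v\in\overline{\mathcal C}{}'$ and $\overline{\mathcal C}{}'$ is closed.

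Next I would handle $\overline{\mathcal C}{}'\cup\mathcal C$. Suppose $u\in\overline{\mathcal C}{}'\cup\mathcal C$ and $v\leftarrow_\Gamma u$; I must show $v\in\overline{\mathcal C}{}'\cup\mathcal C$. If $u\in\overline{\mathcal C}{}'$ then $v\in\overline{\mathcal C}{}'$ by the previous paragraph, and we are done. So assume $u\in\mathcal C$. Again let $\mathcal D'$ be the cell of $v$; since $v\le_\Gamma u$ and $u\in\mathcal C$ we have $\mathcal D'\le_\Gamma\mathcal C$. There are two cases: either $\mathcal D' = \mathcal C$, in which case $v\in\mathcal C\subseteq\overline{\mathcal C}{}'\cup\mathcal C$; or $\mathcal D'<_\Gamma\mathcal C$ strictly, in which case $\mathcal D'$ is not $\ge_\Gamma\mathcal C$ (else $\mathcal D'\sim_\Gamma\mathcal C$, forcing $\mathcal D' = \mathcal C$), so $v\notin\overline{\mathcal C}$, i.e.\ $v\in\overline{\mathcal C}{}'$. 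Either way $v\in\overline{\mathcal C}{}'\cup\mathcal C$, so this set is closed as well.

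The argument is essentially a formal manipulation of the cell preorder, so there is no serious obstacle; the one point requiring a little care is the dichotomy in the second part — one must note that $\mathcal D'\le_\Gamma\mathcal C$ means either $\mathcal D' = \mathcal C$ or $\mathcal D'$ lies strictly below $\mathcal C$ and hence outside $\overline{\mathcal C}$, using that $\le_\Gamma$ descends to a genuine partial order on cells. I would also remark that closedness of $\overline{\mathcal C}{}'$ is exactly the statement that $\overline{\mathcal C}$ spans an $\mathcal H$-submodule, which is consistent with the earlier discussion that unions of up-sets of cells are closed; one could alternatively cite that principle directly, but the short direct verification above is cleaner.
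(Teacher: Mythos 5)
Your argument is correct and is essentially the paper's own proof: both pass to the cells containing the two vertices, use that $\le_\Gamma$ induces a partial order on cells, and derive a contradiction from $\overline{\mathcal C}$ being an up-set. The only difference is that you spell out the second part (the dichotomy $\mathcal D'=\mathcal C$ versus $\mathcal D'<_\Gamma\mathcal C$), which the paper dismisses as ``a similar argument.''
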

\begin{proof}
To show that \(\overline{\mathcal C}{}'\) is closed it is
sufficient to show that whenever \(c_{w} \in \overline{\mathcal
C}{}'\) and \(c_{y} \in C\) is such that \(c_{y} \le_\Gamma
c_{w}\), then \(c_{y} \in \overline{\mathcal C}{}'\). Given such
elements \(c_w\) and \(c_y\), let \(\mathcal{Y}\) and
\(\mathcal{W}\) be the cells that contain \(c_{y}\) and \(c_w\).
Then \(\mathcal{W}\not\ge_\Gamma\mathcal{C}\) since  \(c_{w}
\notin \overline{\mathcal C}\), and since
\(\mathcal{W}\ge_\Gamma\mathcal{Y}\) it follows that
\(\mathcal{Y}\not\ge_\Gamma\mathcal{C}\), whence \(c_{y}\notin
\overline{\mathcal C}\), as required. A similar argument proves
that \(\overline{\mathcal C}{}'\cup\mathcal C\) is also closed.
\end{proof}

\begin{cor}\label{leftidealcells}
For each \(\mathcal C \in\mathscr C\) the sets
\(\mathscr{I}_{\,\overline{\mathcal C}}\) and
\(\mathscr{I}_{\,\overline{\mathcal C}\setminus\mathcal C}\) are
\(W\!\)-graph ideals with respect to~\(J\). The associated
\(W\!\)-graphs are the corresponding full subgraphs of \(\Gamma\),
with \(\tau\) and \(\mu\) inherited from~\(\Gamma\).
\end{cor}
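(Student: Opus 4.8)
The plan is to derive both assertions as immediate applications of Theorem~\ref{wgdetset}, taking the given $W\!$-graph ideal $\mathscr I$ in the role of the ambient ideal $\mathscr{I}_{0}$ and its $W\!$-graph basis $C$ in the role of $C_{0}$. The ingredients needed to check the hypotheses of that theorem are already at hand: Lemma~\ref{leftidealIk} shows that $\mathscr{I}_{\,\overline{\mathcal C}}$ and $\mathscr{I}_{\,\overline{\mathcal C}\setminus\mathcal C}$ are ideals of $(W,\le_{L})$, and, being subsets of $\mathscr I$, they are sub-ideals of $\mathscr I$; Lemma~\ref{backclosed} shows that $\overline{\mathcal C}{}'=C\setminus\overline{\mathcal C}$ and $\overline{\mathcal C}{}'\cup\mathcal C$ are closed subsets of $C$.

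First I would handle $\mathscr{I}_{\,\overline{\mathcal C}}$. The set-theoretic complement of this sub-ideal in $\mathscr I$ is $\mathscr I\setminus\mathscr{I}_{\,\overline{\mathcal C}}=\{\,w\in\mathscr I\mid c_{w}\notin\overline{\mathcal C}\,\}=\mathscr{I}_{\,\overline{\mathcal C}{}'}$, whose associated family of basis vectors is exactly $\overline{\mathcal C}{}'$. Since $\overline{\mathcal C}{}'$ is closed by Lemma~\ref{backclosed}, Theorem~\ref{wgdetset} applies: it gives that $\mathscr{I}_{\,\overline{\mathcal C}}$ is a $W\!$-graph ideal with respect to $J$, with associated $W\!$-graph isomorphic to the full subgraph of $\Gamma$ on the vertex set $\{\,c_{w}\mid w\in\mathscr{I}_{\,\overline{\mathcal C}}\,\}=\overline{\mathcal C}$, carrying the $\tau$ and $\mu$ functions inherited from $\Gamma$.

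Next I would repeat the argument for $\mathscr{I}_{\,\overline{\mathcal C}\setminus\mathcal C}$. Its complement in $\mathscr I$ consists of the $w\in\mathscr I$ with $c_{w}\notin\overline{\mathcal C}\setminus\mathcal C$, that is, with $c_{w}\in(C\setminus\overline{\mathcal C})\cup\mathcal C=\overline{\mathcal C}{}'\cup\mathcal C$; the corresponding family of basis vectors is $\overline{\mathcal C}{}'\cup\mathcal C$, which is closed by Lemma~\ref{backclosed}. Theorem~\ref{wgdetset} then yields that $\mathscr{I}_{\,\overline{\mathcal C}\setminus\mathcal C}$ is a $W\!$-graph ideal with respect to $J$, with associated $W\!$-graph the full subgraph of $\Gamma$ on $\overline{\mathcal C}\setminus\mathcal C$, again with $\tau$ and $\mu$ inherited from~$\Gamma$.

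Each of the two steps is only a direct verification that the hypotheses of Theorem~\ref{wgdetset} hold, so I do not expect a genuine obstacle. The one point that must be handled with a little care is the bookkeeping that identifies the set-theoretic complement of a sub-ideal inside $\mathscr I$ with the complementary collection of $W\!$-graph basis vectors in $C$, so that the closedness statements furnished by Lemma~\ref{backclosed} can be invoked exactly as stated.
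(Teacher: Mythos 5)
Your proposal is correct and follows exactly the paper's own argument: Lemma~\ref{leftidealIk} supplies the sub-ideal property, Lemma~\ref{backclosed} supplies closedness of the complementary vertex sets, and Theorem~\ref{wgdetset} does the rest. The only difference is that you spell out the bookkeeping identifying complements of sub-ideals with complements of basis subsets, which the paper leaves implicit.
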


\begin{proof}
Since Lemma \ref{leftidealIk} shows that
\(\mathscr{I}_{\,\overline{\mathcal C}}\) and
\(\mathscr{I}_{\,\overline{\mathcal C}\setminus\mathcal C}\) are
subideals of the \(W\!\)-graph ideal~\(\mathscr I\!\), and
Lemma~\ref{backclosed} shows that the complements of
\(\overline{\mathcal C}\) and \(\overline{\mathcal
C}\setminus\mathcal C\) are closed subsets of~\(C\), the result
follows immediately from Theorem \ref{wgdetset}.
\end{proof}

\begin{rem}\label{filtration}
In the above situation, the closed subsets \(\overline{\mathcal
C}{}'\) and \(\overline{\mathcal C}{}'\cup\mathcal C\) of \(C\)
span \(\mathcal H\)-submodules \(M_{\Gamma(\overline{\mathcal
C}{}')}\) and \(M_{\Gamma(\overline{\mathcal C}{}'\cup\mathcal
C)}\) of \(M_\Gamma=\mathscr S(\mathscr I,J)\). Furthermore, the
factor module \(M_{\Gamma(\overline{\mathcal C}{}'\cup\mathcal
C)}/M_{\Gamma(\overline{\mathcal C}{}')}\) is isomorphic to the
\(\mathcal H\)-module determined by the cell~\(\mathcal C\), which
in turn is isomorphic to the kernel of the natural homomorphism
\(f\colon \mathscr S(\mathscr{I}_{\,\overline{\mathcal C}},J)\to
\mathscr S(\mathscr{I}_{\,\overline{\mathcal C}\setminus\mathcal
C},J)\).
\end{rem}

For later reference, we record the following special case of
Corollary \ref{leftidealcells}, obtained by setting \(\mathcal C =
\mathcal C_1\).

\begin{lem}\label{bottomcellofI0}
The set \(\mathscr{I}_{1}=\mathscr{I}_{\mathcal C_1}\) is a
\(W\!\)-graph ideal, and the corresponding \(W\!\)-graph is
exactly that of the maximal cell of \(\Gamma\).
\end{lem}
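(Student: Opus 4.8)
The plan is to read off Lemma~\ref{bottomcellofI0} as the special case $\mathcal{C}=\mathcal{C}_1$ of Corollary~\ref{leftidealcells}, after making explicit what $\overline{\mathcal{C}_1}$ is. The first step is to invoke Lemma~\ref{mincell}, which says that $\mathcal{C}_1$ is the \emph{unique} maximal element of the poset $(\mathscr{C},\leq_\Gamma)$ of cells of $\Gamma$. Recalling that $\overline{\mathcal{C}}$ was defined as the union of all cells $\mathcal{D}\in\mathscr{C}$ with $\mathcal{C}\leq_\Gamma\mathcal{D}$, maximality of $\mathcal{C}_1$ forces the only such cell to be $\mathcal{C}_1$ itself; hence $\overline{\mathcal{C}_1}=\mathcal{C}_1$, and therefore $\mathscr{I}_{\,\overline{\mathcal{C}_1}}=\mathscr{I}_{\mathcal{C}_1}=\mathscr{I}_1$.

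The second step is then purely a matter of quoting Corollary~\ref{leftidealcells} with $\mathcal{C}=\mathcal{C}_1$: it asserts that $\mathscr{I}_{\,\overline{\mathcal{C}_1}}$ is a $W\!$-graph ideal with respect to $J$, and that its associated $W\!$-graph is the full subgraph of $\Gamma$ on the vertex set $\{\,c_w\mid w\in\mathscr{I}_{\,\overline{\mathcal{C}_1}}\,\}$, with $\tau$ and $\mu$ inherited from $\Gamma$. Substituting the identification $\overline{\mathcal{C}_1}=\mathcal{C}_1$ from the first step, this vertex set is exactly $\mathcal{C}_1$, so the associated $W\!$-graph is $\Gamma(\mathcal{C}_1)$, the $W\!$-graph of the maximal cell of $\Gamma$. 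This is precisely the claim.

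I expect no real obstacle: all the substance is already contained in Lemma~\ref{mincell} (uniqueness of the maximal cell) and in Corollary~\ref{leftidealcells}, which itself rests on Theorem~\ref{wgdetset}. The only point requiring a little care is the bookkeeping that only the "$\mathscr{I}_{\,\overline{\mathcal{C}}}$" half of Corollary~\ref{leftidealcells} is needed here, the "$\mathscr{I}_{\,\overline{\mathcal{C}}\setminus\mathcal{C}}$" half playing no role; so the argument really is just the two short deductions above.
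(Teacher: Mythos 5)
Your proposal is correct and follows exactly the paper's route: the paper states this lemma precisely as the special case $\mathcal C=\mathcal C_1$ of Corollary~\ref{leftidealcells}, and your only added step---using Lemma~\ref{mincell} to identify $\overline{\mathcal C_1}=\mathcal C_1$ and hence $\mathscr I_{\,\overline{\mathcal C_1}}=\mathscr I_{\mathcal C_1}=\mathscr I_1$---is the intended (and valid) bookkeeping.
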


\section{Cells in the ideal of minimal coset representatives}\label{dcrcells}

Let \((W,S)\) be a Coxeter system, and let \(\mathcal{H} =
\mathcal{H}(W)\) be the associated Hecke algebra. Let \(J\) be an
arbitrary subset of~\(S\) and \(D_{J}\) the set of distinguished
left coset representatives for~\(W_{J}\). It is easily shown that
if \(u\in W\) is a suffix of some element of \(D_J\) then \(u\) is
also in~\(D_J\); so \(\mathscr I=D_J\) is an ideal of
\((W,\le_L)\). Clearly \(\Pos(\mathscr I)=J\). In \cite[Section
8]{howvan:wgraphDetSets}, it is shown that \(\mathscr I\) is a
\(W\!\)-graph ideal with respect to \(\emptyset\) and also with
respect to~\(J\). Here we consider the latter case only. We
briefly review the main facts, referring the reader to
\cite{howvan:wgraphDetSets} for the full details.

Let \(\mathcal{H}_J\) be the Hecke algebra associated with the
Coxeter system \((W_{J},J)\). Let \(\mathcal{A}_\phi\) be
\(\mathcal{A}\) made into an \(\mathcal{H}_J\)-module via the
homomorphism \(\phi\colon\mathcal{H}_J\to\mathcal{A}\) defined by
\(\phi(T_u)=(-q)^{-l(u)}\) for all \(u\in W_J\), and let
\(\mathscr{S}_{\!\phi}=\mathcal{H}\otimes_{\mathcal{H}_J}\mathcal{A}_\phi\),
the \(\mathcal{H}\)-module induced from~\(\mathcal{A}_\phi\) (so
that \(\mathscr{S}_{\!\phi}\) is essentially the module \(M^J\) of
\cite{deo:paraKL} in the case \(u=-1\)). Then
\(\mathscr{S}_{\!\phi}\) is \(\mathcal{A}\)-free with
\(\mathcal{A}\)-basis \(B_{J} = (\,b^{J}_{w} \mid w \in
D_{J}\,)\), where \(b^{J}_{w}=T_{w}\otimes 1\) for each \(w \in
D_{J}\). All the conditions in Definition \ref{wgphdetelt} are
satisfied, and so \(\mathscr{S}_{\phi}\) has a \(W\!\)-graph basis
\(C_{J} = (\,c^{J}_{w} \mid w \in D_{J}\,)\) such that
\(c^{J}_{w}=b^{J}_{w}-\sum_{y<w} qp^{J}_{y,w}b^{J}_{y}\) for all
\(w\in D_{J}\), where the polynomials~\(p^{J}_{y,w}\) are given by
the formulas in Section~\ref{wgdetset} above. Note that in the
special case \(J=\emptyset\) the module \(\mathscr{S}_{\!\phi}\)
is isomorphic to the left regular module~\(\mathcal H\), and the
\(W\!\)-graph basis is \(C_\emptyset=(\,c_w\mid w\in W\,)\), the
Kazhdan-Lusztig basis of~\(\mathcal H\). In this case the
\(W\!\)-graph~\(\Gamma(C_{J})\) becomes the regular
Kazhdan-Lusztig \(W\!\)-graph~\(\Gamma(W)\), and
\(c_{w}=T_{w}-\sum_{y<w} qp_{y,w}T_{y}\) for all \(w\in W\); see
\cite[Proposition~8.2]{howvan:wgraphDetSets}.

We shall show that if \(W_J\) is finite then the
\(W\!\)-graph~\(\Gamma(C_{J})\) is isomorphic to the \(W\!\)-graph
of a certain union of left cells in~\(\Gamma(W)\).
\begin{prop}\label{deo2}If \(J\subseteq S\) and \(W_J\) is finite then then the
polynomials \(p^{J}_{y,w}\) and \(p_{y,w}\) defined above are
related via the formula \( p^{J}_{y,w} = p_{yw_{J},ww_{J}}, \)
where \(w_{J}\) is the longest element in \(W_{J}\).
\end{prop}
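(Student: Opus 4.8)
The plan is to prove the identity $p^{J}_{y,w} = p_{yw_J,\,ww_J}$ by relating the two modules $\mathscr{S}_{\!\phi}$ and $\mathcal{H}$ (the latter carrying the Kazhdan--Lusztig basis) through a concrete $\mathcal{A}$-linear map, and then invoking uniqueness of the $W\!$-graph basis to force the polynomial identity. First I would record the basic combinatorial fact underlying everything: since $W_J$ is finite, right multiplication by $w_J$ is a length-reversing bijection $D_J \to D_J w_J \subseteq W$, and for $x,y \in D_J$ one has $x \le y$ in Bruhat order if and only if $xw_J \ge yw_J$; moreover $l(yw_J) = l(y) + l(w_J)$ for $y \in D_J$. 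I would also note the left-descent translation: $\mathcal{L}(yw_J) = \mathcal{L}(y) \sqcup (\text{something involving } J)$ in a way that matches $\D_J(D_J, y)$ under the identification — this is exactly the content of the $\tau$-invariants agreeing, and it is presumably already implicit in the identification of $\mathscr{S}_{\!\phi}$ with $M^J$ from \cite{deo:paraKL}.

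Next I would set up the map. Define $\theta\colon \mathscr{S}_{\!\phi} \to \mathcal{H}$ (or its relevant image) by $\theta(b^J_w) = T_{ww_J}$ — equivalently $\theta(h \otimes 1) = h \cdot T_{w_J} \cdot (\text{scalar})$, using that $T_s T_{w_J} = -q^{-1}T_{w_J}$ for $s \in J$ matches $\phi(T_s) = -q^{-1}$, so $\theta$ is a well-defined $\mathcal{H}$-module homomorphism onto the left ideal $\mathcal{H}T_{w_J}$. The point is that $\theta$ intertwines the bar involutions: $\overline{b^J_1} = b^J_1$ maps to $\overline{T_{w_J}}$ up to the appropriate known correction, and in fact $T_{w_J}$ (suitably normalized) is bar-invariant or else $\theta(\overline{\alpha}) = \overline{\theta(\alpha)}$ holds on the nose — this needs a short check using the semilinear involutions on both sides. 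Then $\theta(c^J_w)$ is a bar-invariant element of $\mathcal{H}T_{w_J}$ expressible as $T_{ww_J} - \sum_{y<w} qp^J_{y,w}T_{yw_J}$; re-indexing via $z = yw_J$ (so $z$ ranges over elements below $ww_J$ that lie in $D_J w_J$) and comparing with the defining recursion for the Kazhdan--Lusztig basis element $c_{ww_J} = T_{ww_J} - \sum_{z < ww_J} qp_{z,ww_J}T_z$, the uniqueness statement (the analogue of Lemma~\ref{uniCbasis1}, or directly Kazhdan--Lusztig uniqueness) forces $\theta(c^J_w) = c_{ww_J}$ and hence $p^J_{y,w} = p_{yw_J,\,ww_J}$, with the coefficients indexed by $z \notin D_J w_J$ vanishing automatically.

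The main obstacle I anticipate is the bookkeeping around the bar involution and the precise normalization of $T_{w_J}$: one must check that $\theta$ genuinely commutes with the semilinear involutions, which amounts to verifying $\overline{T_{w_J}} = T_{w_J}$ in the present (symmetrized) convention, or absorbing a unit of $\mathcal{A}$ into the definition of $\theta$ if it does not. A secondary technical point is confirming that $c^J_w$ maps into the span of $\{T_z : z \le ww_J\}$ with the correct triangularity — i.e.\ that $y < w$ in $D_J$ translates to $yw_J < ww_J$, which follows from the order-reversal fact above but should be stated carefully. Once these are in hand, the comparison of recursions and the uniqueness argument are routine.
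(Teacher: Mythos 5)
Your strategy---realizing $\mathscr{S}_{\!\phi}$ inside $\mathcal{H}$ via an intertwiner and invoking uniqueness of the bar-invariant triangular basis---is a legitimate route (it is essentially how Deodhar establishes the corresponding fact), and it differs genuinely from the paper's proof, which is a two-line citation: the paper identifies $p^J_{y,w}$ with Deodhar's parabolic polynomial at $u=-1$ and $p_{y,w}$ with the ordinary Kazhdan--Lusztig polynomial, and then quotes \cite[Proposition~3.4]{deo:paraKL}. However, your construction of the intertwiner fails at its first step. The identity $T_sT_{w_J}=-q^{-1}T_{w_J}$ for $s\in J$ is false: since $l(sw_J)<l(w_J)$ we have $T_sT_{w_J}=T_{sw_J}+(q-q^{-1})T_{w_J}$. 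Consequently $h\otimes1\mapsto hT_{w_J}$ is not well defined on $\mathcal{H}\otimes_{\mathcal{H}_J}\mathcal{A}_\phi$ (it sends $T_u\otimes1$ and $(-q)^{-l(u)}(1\otimes1)$, which are equal in the tensor product, to distinct elements), and $\mathcal{H}T_{w_J}=\mathcal{H}$ because $T_{w_J}$ is invertible, so it is not the proper left ideal you need. The bar-compatibility problem you flag as mere bookkeeping is likewise not repairable by a unit: $\overline{T_{w_J}}=T_{w_J}^{-1}$, which is not a scalar multiple of $T_{w_J}$. A smaller slip: for $y,w\in D_J$ the map $w\mapsto ww_J$ is order-\emph{preserving}, not order-reversing (since $l(ww_J)=l(w)+l(w_J)$), and it is precisely this preservation that gives the triangularity you need.

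All of this is fixable by replacing $T_{w_J}$ with the Kazhdan--Lusztig element $c_{w_J}=T_{w_J}-\sum_{u<w_J}qp_{u,w_J}T_u$, a scalar multiple of $\sum_{u\in W_J}(-q)^{-l(u)}T_u$: this is the simultaneous $(-q^{-1})$-eigenvector for the $T_s$ with $s\in J$, it is bar-invariant, and $\mathscr{S}_{\!\phi}\cong\mathcal{H}c_{w_J}$ is exactly the identification recorded in Lemma~\ref{geck1}. With $\theta(h\otimes1)=hc_{w_J}$ one gets $\theta(b^J_y)=T_yc_{w_J}=T_{yw_J}-\sum_{u<w_J}qp_{u,w_J}T_{yu}$, so $\theta(c^J_w)$ is bar-invariant and congruent to $T_{ww_J}$ modulo $\sum_z q\mathcal{A}^+T_z$, whence $\theta(c^J_w)=c_{ww_J}$ by uniqueness; and since the factorization $z=yu$ with $y\in D_J$ and $u\in W_J$ is unique, the coefficient of $T_{yw_J}$ in $\theta(c^J_w)$ comes only from the summand $-qp^J_{y,w}\,T_yc_{w_J}$, giving $p^J_{y,w}=p_{yw_J,ww_J}$. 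As written, though, the central map of your argument does not exist.
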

\begin{proof}
In view of the relationship between our polynomials
\(p^{J}_{y,w}\) and Deodhar's polynomials \(P^{J}_{y,w}\) (see
\cite[Proposition~8.4]{howvan:wgraphDetSets}), and the
relationship between our polynomials \(p_{y,w}\) and the
Kazhdan-Lusztig polynomials \(P_{y,w}\) (see
\cite[Proposition~8.2]{howvan:wgraphDetSets}), this result is
immediate from \cite[Proposition~3.4]{deo:paraKL}.
\end{proof}
For each \(w\in W\), define \(L(w)=\{\,s\in S\mid sw<w\,\}\). Note
that this is the \(\tau\)-invariant of the vertex \(c_w\)
of~\(\Gamma(W)\).
\begin{lem}\label{descentcomp}
If \(W_J\) is finite and \(w \in D_{J}\), then \(L(ww_{J}) =
\D_{J}(\mathscr I,w)\), where \(\mathscr I=D_J\) (as above).
\end{lem}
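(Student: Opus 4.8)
The plan is to run through the partition of $S$ given by the descent/ascent sets and to decide, for each $s\in S$, whether $s$ is a left descent of $ww_{J}$ by a short length computation. Since $W_{J}$ is finite, $w_{J}$ is defined and is the longest element of $W_{J}$, so $l(tw_{J})=l(w_{J})-1$ for every $t\in J$; and since $w\in D_{J}$ we have $l(wu)=l(w)+l(u)$ for every $u\in W_{J}$, in particular $l(ww_{J})=l(w)+l(w_{J})$. The other input I will use is the trichotomy for $sw$ when $w\in D_{J}$ and $s\in S$: either $sw<w$ (i.e.\ $s\in\SD(\mathscr{I},w)$), or $sw>w$ and $sw\in D_{J}$ (i.e.\ $s\in\SA(\mathscr{I},w)$), or $sw>w$ and $sw=w(w^{-1}sw)$ with $w^{-1}sw\in J$ (i.e.\ $s\in\WD_{J}(\mathscr{I},w)$). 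This is exactly the content of the remark following Definition~\ref{wgphdetelt}, which rests on \cite[Lemma~2.1]{deo:paraKL}; note that here $\WA_{J}(\mathscr{I},w)$ is empty because $\mathscr{I}=D_{J}$, so these three cases exhaust $S$.

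Now I take the cases in turn. If $s\in\SD(\mathscr{I},w)$ then $l(sw)=l(w)-1$, so $l(sww_{J})\le l(sw)+l(w_{J})=l(ww_{J})-1$, and hence $s(ww_{J})<ww_{J}$. If $s\in\WD_{J}(\mathscr{I},w)$, write $t=w^{-1}sw\in J$; then $sww_{J}=w(tw_{J})$ with $tw_{J}\in W_{J}$ of length $l(w_{J})-1$, so $l(sww_{J})=l(w)+l(w_{J})-1=l(ww_{J})-1$ and again $s(ww_{J})<ww_{J}$. Finally, if $s\in\SA(\mathscr{I},w)$ then $sw\in D_{J}$ with $l(sw)=l(w)+1$, so $l(sww_{J})=l(sw)+l(w_{J})=l(ww_{J})+1$ and $s(ww_{J})>ww_{J}$. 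Thus $L(ww_{J})$ contains $\SD(\mathscr{I},w)\cup\WD_{J}(\mathscr{I},w)$ and is disjoint from $\SA(\mathscr{I},w)$; since these three sets partition $S$, it follows that $L(ww_{J})=\SD(\mathscr{I},w)\cup\WD_{J}(\mathscr{I},w)=\D_{J}(\mathscr{I},w)$.

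The computation is routine once the trichotomy is in hand; the one point that really matters is that when $sw\notin D_{J}$ one has the precise statement $sw=w(w^{-1}sw)$ with $w^{-1}sw\in J$ (and not merely that $sw$ fails to be a minimal coset representative), since it is this identity that lets one rewrite $sww_{J}$ as $w$ times an element of $W_{J}$. That is the step I would flag, and I would cite it rather than reprove it, referring to \cite[Lemma~2.1]{deo:paraKL} as the paper already does.
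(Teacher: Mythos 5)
Your proof is correct and follows essentially the same route as the paper: the same length identities ($l(wu)=l(w)+l(u)$ for $w\in D_J$, $u\in W_J$, and $l(tw_J)<l(w_J)$ for $t\in J$) and the same appeal to Deodhar's Lemma 2.1(iii) for the identity $sw=w(w^{-1}sw)$ with $w^{-1}sw\in J$. The only difference is organizational --- you run one case analysis over the partition $\SD\cup\WD\cup\SA$ of $S$ and conclude by exhaustion, whereas the paper proves the two inclusions separately --- but the underlying computations are identical.
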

\begin{proof}
Suppose first that \(s \in L(ww_{J})\), so that \(sww_{J} <
ww_{J}\). If \(sw < w\) then \(s \in \SD_J(w)\subseteq \D_J(w)\).
On the other hand, if \(sw > w\) then \(sw \notin D_{J}=\mathscr
I\), since otherwise we would have \(l(sww_{J}) = l(sw) + l(w_{J})
> l(w) + l(w_{J})=l(ww_{J})\). So in this case \(s \in
\WD_{J}(w)\subseteq \D_J(w)\), and we conclude that
\(L(ww_{J})\subseteq D_{J}(w)\).

Suppose conversely that \(s \in \D_{J}(w)\). If \(s \in \SD(w)\)
then \(sw < w\), and it follows that \(l(sww_{J}) = l(sw) +
l(w_{J}) < l(w) + l(w_{J}) = l(ww_{J})\), so that \(s \in
L(ww_{J})\). If \(s \in \WD_{J}(w)\) then \(sw\notin \mathscr I =
D_{J}\), but since \(w\in D_{J}\) it follows from Lemma~2.1~(iii)
of \cite{deo:paraKL} that \(sw=ws'\) for some \(s' \in J\). Thus
\(l(sww_{J}) = l(ws'w_{j}) = l(w) + l(s'w_{J})\), since \(w\in
D_J\) and \(s'w_{J}\in W_{J}\). But \(l(s'w_{J}) < l(w_{J})\),
since \(w_{J}\) is the longest element in \(W_{J}\), and so \(l(w)
+ l(s'w_{J})< l(w) + l(w_{J}) = l(ww_{J})\). Hence \(s\in
L(ww_{J})\), and we conclude that \(D_{J}(w)\subseteq L(ww_{J})\),
as required.
\end{proof}
The following result is immediate from
\cite[Lemma~2.8]{geck:klMurphy}.
\begin{lem}\label{geck1}
The set \(D_{J}w_{J}\) is a union of Kazhdan-Lusztig left cells:
we have
\begin{equation*}
D_{J}w_{J} = \{w \in W \mid w \le_{\Gamma} w_{J}\}.
\end{equation*}
Furthermore, \(\mathcal{A}C_{J} = M_{\Gamma(C_J)} \cong
\mathcal{H}c_{w_{J}} \cong \mathscr{S}_{\phi}\) as left
\(\mathcal{H}\)-modules.
\end{lem}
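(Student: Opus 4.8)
The plan is to prove the two assertions separately. The combinatorial identity \(D_Jw_J=\{\,w\in W\mid w\le_\Gamma w_J\,\}\) for the regular Kazhdan--Lusztig \(W\!\)-graph \(\Gamma(W)\) is exactly \cite[Lemma~2.8]{geck:klMurphy}, so the efficient route in the paper is simply to quote it, observing that the right-hand side, being a down-set for the preorder \(\le_\Gamma\), is automatically a union of left cells. If one wants to reprove it I would argue by two inclusions after recording the elementary facts that \(\mathcal L(w_J)=\mathcal R(w_J)=J\) and that \(D_Jw_J\) is precisely the set of \(w\in W\) with \(J\subseteq\mathcal R(w)\), each such \(w\) factoring uniquely as \(dw_J\) with \(d\in D_J\) and \(l(w)=l(d)+l(w_J)\). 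For the inclusion \(D_Jw_J\subseteq\{\,w\mid w\le_\Gamma w_J\,\}\): given \(w=dw_J\), fix a reduced word \(d=s_1\cdots s_k\); every suffix \(s_i\cdots s_k\) again lies in \(D_J\), so each partial product \(s_i\cdots s_kw_J\) has length \((k-i+1)+l(w_J)\), whence passing from \(s_i\cdots s_kw_J\) to \(s_{i-1}s_i\cdots s_kw_J\) is left multiplication by a generator that increases the length. Whenever \(l(sv)>l(v)\) one has \(s\in\mathcal L(sv)\setminus\mathcal L(v)\) and \(P_{v,sv}=1\), so \(\mu(c_{sv},c_v)\ne0\) and hence \(c_{sv}\leftarrow_\Gamma c_v\); iterating along the word gives \(c_w\le_\Gamma c_{w_J}\).

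For the reverse inclusion the key point is that \(Y:=\{\,y\in W\mid J\subseteq\mathcal R(y)\,\}\) is closed downwards under \(\le_\Gamma\). Indeed, if \(c_y\leftarrow_\Gamma c_w\) then \(c_y\) occurs with nonzero coefficient in \(T_sc_w\) for some \(s\in\mathcal L(y)\setminus\mathcal L(w)\); and for any \(t\in J\subseteq\mathcal R(w)\) the \(W\times W^o\)-graph structure of \(\Gamma(W)\) gives \(c_wT_t=-q^{-1}c_w\), so \((T_sc_w)T_t=-q^{-1}(T_sc_w)\). Thus \(T_sc_w\) lies in the \((-q^{-1})\)-eigenspace of right multiplication by \(T_t\), which by the block-triangular shape of that operator in the Kazhdan--Lusztig basis is spanned by \(\{\,c_v\mid t\in\mathcal R(v)\,\}\); hence \(t\in\mathcal R(y)\), and as \(t\in J\) was arbitrary, \(y\in Y\). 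Since \(w_J\in Y\) and \(Y\) is down-closed, \(\{\,w\mid w\le_\Gamma w_J\,\}\subseteq Y=D_Jw_J\), completing the identity.

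It remains to deduce the module isomorphisms \(\mathcal AC_J=M_{\Gamma(C_J)}\cong\mathcal Hc_{w_J}\cong\mathscr S_\phi\). The equalities \(\mathcal AC_J=M_{\Gamma(C_J)}=\mathscr S_\phi\) hold by the definition of the module attached to a \(W\!\)-graph and by the way \(C_J\) was produced in Section~\ref{dcrcells}, so only \(\mathscr S_\phi\cong\mathcal Hc_{w_J}\) needs argument. Since \(\tau(c_{w_J})=\mathcal L(w_J)=J\) in \(\Gamma(W)\), relation~\eqref{wgraphdef} gives \(T_tc_{w_J}=-q^{-1}c_{w_J}=\phi(T_t)c_{w_J}\) for all \(t\in J\), so the \(\mathcal H\)-linear map \(\mathscr S_\phi\to\mathcal H\) determined by \(b^J_1\mapsto c_{w_J}\) is well defined, with image \(\mathcal Hc_{w_J}\). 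It sends the basis vector \(b^J_w=T_wb^J_1\) to \(T_wc_{w_J}=T_{ww_J}+(\text{a combination of }T_z\text{ with }l(z)<l(ww_J))\); since the elements \(ww_J\) (\(w\in D_J\)) are pairwise distinct, a leading-term argument shows \(\{\,T_wc_{w_J}\mid w\in D_J\,\}\) is \(\mathcal A\)-linearly independent, so the map carries a basis to a basis and is therefore an isomorphism onto \(\mathcal Hc_{w_J}\). (Alternatively, one identifies \(\mathcal Hc_{w_J}\) with the \(\mathcal H\)-submodule of \(\mathcal H\) spanned by the closed set \(\{\,c_v\mid v\in D_Jw_J\,\}\subseteq C_\emptyset\) and matches the resulting \(W\!\)-graph basis with \(C_J\) using Proposition~\ref{deo2}.)

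I expect the reverse inclusion of the combinatorial identity to be the main obstacle: it genuinely uses the two-sided \(W\times W^o\) structure of \(\Gamma(W)\), i.e.\ that repeated left multiplication of \(c_{w_J}\) cannot destroy a right descent among the basis elements that appear, and this structural input is exactly what makes quoting Geck's lemma the clean choice. The remaining ingredients — suffixes of elements of \(D_J\) lying in \(D_J\), \(P_{v,sv}=1\) for a single length-increasing reflection, and the triangularity of \(c_{w_J}\) in the \(T\)-basis — are all routine.
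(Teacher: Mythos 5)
Your proposal is correct, and its headline route coincides with the paper's: the paper's entire ``proof'' of this lemma is the sentence preceding it, namely that the statement is immediate from \cite[Lemma~2.8]{geck:klMurphy}, with the explicit \(W\!\)-graph isomorphism \(c^J_w\mapsto c_{ww_J}\) then extracted in the surrounding remarks from Lemma~\ref{descentcomp} and Proposition~\ref{deo2}. What you add is a correct unwinding of that citation. The forward inclusion via reduced suffixes of \(d\in D_J\), the facts \(P_{v,sv}=1\) and \(s\in\mathcal L(sv)\setminus\mathcal L(v)\), and transitivity of \(\le_\Gamma\) is sound (it is essentially Corollary~\ref{edgeup}(i) transported to \(\Gamma(W)\)). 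The reverse inclusion is the genuinely nontrivial half, and you correctly isolate the needed input: the set \(\{\,y\mid J\subseteq\mathcal R(y)\,\}=D_Jw_J\) is down-closed under \(\le_\Gamma\) because left multiplication commutes with the right \(W^o\)-graph action, so \(T_sc_w\) stays in the \(-q^{-1}\)-eigenspace of each right \(T_t\), \(t\in J\), and the block-triangularity of that operator in the \(C\)-basis forces every \(c_y\) occurring to satisfy \(t\in\mathcal R(y)\). The module isomorphism via Frobenius reciprocity (\(b^J_1\mapsto c_{w_J}\) is legitimate since \(\mathcal L(w_J)=J\)) together with the leading-term argument \(T_wc_{w_J}=T_{ww_J}+(\text{lower terms})\) is also fine. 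In short: the citation buys brevity and defers the two-sided structural argument to Geck; your version makes the lemma self-contained at the cost of about a page, and correctly identifies that the only place the \(W\times W^o\)-graph structure is indispensable is the reverse inclusion.
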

More explicitly, the \(\mathcal{H}\)-module isomorphism
\(M_{\Gamma(C_J)} \cong \mathcal{H}c_{w_{J}}\) derives from an
isomorphism of \(W\!\)-graphs. Lemma~\ref{descentcomp} and
Proposition~\ref{deo2} show that the mapping \(c_{w}^{J}\mapsto
c_{ww_{J}}\) from \(C_{J}\) to \(C_{\emptyset}\) induces an
isomorphism of the \(W\!\)-graph \(\Gamma(C_J)\) (obtained from
\(D_J\) considered as a \(W\!\)-graph ideal with respect to \(J\))
with the full subgraph of \(\Gamma(W)\) corresponding to the set
\(D_{J}w_{J}\). The mapping preserves edge-weights and
\(\tau\)-invariants, and hence preserves cells and the partial
order on cells.

As an immediate consequence of Lemma \ref{bottomcellofI0} and the
above remarks, we obtain the second main result of this paper.
\begin{thr}\label{wgdetsetcell}
Let \(J\subseteq S\) be such that \(W_J\) is finite, and let
\(\mathcal{C}\) be the Kazhdan-Lusztig left cell that contains
\(w_{J}\).  Then \(\mathcal{C} = \mathscr{I\!}_1w_{J}\), where
\(\mathscr{I\!}_1\subseteq D_{J}\) is a \(W\!\)-graph ideal with
respect to~\(J\), and \(\mathcal{C}_1=\{\,c^{J}_{w}\mid w\in
\mathscr{I\!}_1\,\}\) is the maximal cell of~\(\,\Gamma(C_{J})\).
\end{thr}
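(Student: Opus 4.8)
The plan is to read the statement off from the machinery of Section~\ref{descr}, applied to the \(W\!\)-graph ideal \(\mathscr{I}_0 = D_J\) --- which, by \cite[Section~8]{howvan:wgraphDetSets}, is a \(W\!\)-graph ideal with respect to~\(J\), with associated \(W\!\)-graph \(\Gamma(C_J)\) --- and then to identify the maximal cell it produces with the Kazhdan--Lusztig left cell of~\(w_J\). Concretely, I would first let \(\mathcal{C}_1\) be the cell of \(\Gamma(C_J)\) containing \(c^J_1\) and put \(\mathscr{I\!}_1 = \{\,w \in D_J \mid c^J_w \in \mathcal{C}_1\,\}\). By Lemma~\ref{mincell}, \(\mathcal{C}_1\) is the unique maximal element of the poset of cells of \(\Gamma(C_J)\), hence is the maximal cell; and Lemma~\ref{bottomcellofI0}, applied with \(\mathscr{I} = D_J\), says exactly that \(\mathscr{I\!}_1 \subseteq D_J\) is a \(W\!\)-graph ideal with respect to~\(J\) whose \(W\!\)-graph is \(\Gamma(\mathcal{C}_1)\). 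This settles every assertion of the theorem except the equality \(\mathcal{C} = \mathscr{I\!}_1 w_J\).

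For that equality I would pass to \(\Gamma(W)\) along the \(W\!\)-graph isomorphism \(\theta\colon \Gamma(C_J) \to \Gamma(D_J w_J)\) given by \(c^J_w \mapsto c_{w w_J}\), where \(\Gamma(D_J w_J)\) denotes the full subgraph of \(\Gamma(W)\) on the vertices indexed by~\(D_J w_J\); its existence, and the fact that it preserves \(\tau\)-invariants, edge weights, cells and the order on cells, is furnished by Lemma~\ref{descentcomp}, Proposition~\ref{deo2} and the remarks following Lemma~\ref{geck1}. Since \(\theta(c^J_1) = c_{w_J}\), the map \(\theta\) carries the maximal cell \(\mathcal{C}_1\) onto the cell of \(c_{w_J}\) in \(\Gamma(D_J w_J)\), so it remains to check that this last cell equals \(\mathcal{C}\), the left cell of~\(w_J\) in the full graph \(\Gamma(W)\). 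Here I would invoke Lemma~\ref{geck1}: the set \(D_J w_J = \{\,w \in W \mid w \le_{\Gamma(W)} w_J\,\}\) is a closed subset of the vertex set of \(\Gamma(W)\), and a short induction along a chain witnessing \(\le_{\Gamma(W)}\) shows that any such chain joining two elements of a closed subset stays inside it; hence the preorder of \(\Gamma(D_J w_J)\) is the restriction of \(\le_{\Gamma(W)}\), so the cells of \(\Gamma(D_J w_J)\) are exactly the Kazhdan--Lusztig left cells contained in \(D_J w_J\), and \(\mathcal{C}\) is the greatest of these because \(v \le_{\Gamma(W)} w_J\) for every \(v \in D_J w_J\). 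Thus \(\theta(\mathcal{C}_1) = \mathcal{C}\), and comparing vertex labels gives \(\{\,w w_J \mid w \in \mathscr{I\!}_1\,\} = \mathcal{C}\); since \(w \mapsto w w_J\) is a bijection from \(D_J\) onto \(D_J w_J\), this says precisely that \(\mathcal{C} = \mathscr{I\!}_1 w_J\).

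I expect the only step that is not a bare citation to be the comparison of the cell structures of \(\Gamma(W)\) and of its full subgraph on the closed subset \(D_J w_J\); it is resolved by the elementary down-set observation above. Everything else --- Lemmas~\ref{mincell} and~\ref{bottomcellofI0} on the ideal side, and Lemma~\ref{descentcomp}, Proposition~\ref{deo2} and Lemma~\ref{geck1} on the left-cell side --- is applied essentially verbatim.
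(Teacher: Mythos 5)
Your proposal is correct and follows essentially the same route as the paper: the paper likewise obtains the theorem as an immediate consequence of Lemma~\ref{bottomcellofI0} applied to the ideal \(D_J\), combined with the isomorphism \(c^J_w\mapsto c_{ww_J}\) onto the full subgraph of \(\Gamma(W)\) on \(D_Jw_J\) (via Lemma~\ref{descentcomp} and Proposition~\ref{deo2}) and with Lemma~\ref{geck1}. The only difference is that you make explicit the (correct) observation that the cells of the full subgraph on a closed subset coincide with the Kazhdan--Lusztig left cells contained in it, a point the paper leaves implicit.
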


The cell \(\mathcal C\) in Theorem~\ref{wgdetsetcell} is the
maximal cell in \(D_{J}w_{J}\), and the theorem tells us that
\(\mathcal Hc_{w_{J}}/\mathcal A(D_{J}w_{J}\setminus\mathcal C)
\cong \mathcal{A}C_J/\mathcal A(C_J\setminus\mathcal{C}_1)\cong
\mathscr S(\mathscr{I\!}_1,J)\) as \(\mathcal H\)-modules.

\section{\(W\!\)-graphs for left cells in type \(A\)}

Let \(W_{n}\) be the Coxeter group of type \(A_{n-1}\), which we
identify with the symmetric group on \([1,n]\), the set of
integers from 1 to~\(n\), by identifying the simple reflections
\(s_1,\,s_2,\,\ldots,\,s_{n-1}\) in \(W_n\) with the
transpositions \((1,2),\,(3,4),\,\ldots,\,(n-1,n)\)
(respectively). We use a left-operator convention for
permutations, writing \(wi\) for the action of \(w\in W_n\) on
\(i\in[1,n]\).

Since the principal objective of this section is to prove
Proposition~6.3 of~\cite{howvan:wgraphDetSets}, we start by
reviewing the conventions and terminology of that paper.

A sequence of nonnegative integers \(\lambda =
(\lambda_{1},\lambda_{2} \ldots, \lambda_{k})\) is called a
partition of \(n\) if \(\lambda_{1} + \lambda_{2} + \cdots +
\lambda_{k} = n\) and \(\lambda_{1} \ge \cdots \ge \lambda_{k}\).
We define \(\mathcal P(n)\) to be the set of all partitions
of~\(n\). For each
\(\lambda=(\lambda_{1},\ldots,\lambda_{k})\in\mathcal P(n)\) we
define
\[
[\lambda]=\{\,(i,j)\mid1\leq j\leq\lambda_{i}\text{ and }1\leq
i\leq k\,\},
\]
and refer to this as the Young diagram of~\(\lambda\). Pictorially
\([\lambda]\) is represented by a left-justified array of boxes
with \(\lambda_{i}\) boxes in the \(i\)-{th} row; the pair
\((i,j)\in[\lambda]\) corresponds to the \(j\)-{th} box in the
\(i\)-{th} row.

For \(\lambda \in\mathcal P(n)\), define \(\lambda'\) by
\(\lambda_{i}' = \abs{\{\,\lambda_{j} \mid \lambda_{j} \geq
i\,\}}\), and call \(\lambda'\) the partition conjugate to
\(\lambda\). The Young diagram of \(\lambda'\) is the transpose of
the Young diagram of \(\lambda\): the number of boxes in the
\(i\)-th column of \([\lambda']\) equals the number of boxes in
the \(i\)-th row of \([\lambda']\).

If \(\lambda\) is a partition of \(n\) then a
\(\lambda\)\textit{-tableau} is a bijection \(t\colon[\lambda]
\rightarrow [1,n] \). The partition \(\lambda\) is called the
\textit{shape\/} of the tableau \(t\), and we write
\(\lambda=\shape(t)\). For each \(i\in[1,n]\) we define
\(\row_t(i)\) and \(\col_t(i)\) to be the row index and column
index of \(i\)~in~\(t\) (so that
\(t^{-1}(i)=(\row_t(i),\col_t(i))\). We define \(\Tab(\lambda)\)
to be the set of all \(\lambda\)-tableaux, and we let
\(\tab^\lambda\) be the specific \(\lambda\)-tableau given by
\[
\tab^\lambda(i,j)=j+\sum_{h=1}^{i-1}\lambda_h
\]
for all \((i,j)\in[\lambda]\). That is, the numbers
\(1,\,2,\,\dots,\,\lambda_1\) fill the first row of \([\lambda]\)
in order from left to right, then the numbers
\(\lambda_1+1,\,\lambda_1+2,\,\dots,\,\lambda_1+\lambda_2\)
similarly fill the second row, and so on. We also define
\(\tab_{\lambda}\) to be the \(\lambda\)-tableau that is the
transpose of the \(\lambda'\)-tableau \(\tab^{\lambda'}\), where
\(\lambda'\) is the conjugate of~\(\lambda\). Thus in
\(\tab_{\lambda}\) the numbers \(1,\,2,\,\dots,\,\lambda_1'\) fill
the first column in order from top to bottom, then the numbers
\(\lambda_1'+1,\,\lambda_1'+2,\,\dots,\,\lambda_1'+\lambda_2'\)
fill the next column from top to bottom, and so on.

It is clear that for any fixed \(\lambda\in P(n)\) the group
\(W_{n}\) acts on the set of all \(\lambda\)-tableaux, via
\((wt)(i,j) = w(t(i,j))\) for all \((i,j)\in[\lambda]\), for all
\(\lambda\)-tableaux \(t\) and all \(w \in W_{n}\). Moreover, the
map from \(W_n\) to \(\Tab(\lambda)\) defined by \(w\mapsto
w\tab_\lambda\) for all \(w\in W_n\) is bijective. We use this
bijection to transfer the left weak Bruhat and the Bruhat partial
orders from \(W_n\) to \(\Tab(\lambda)\). Thus if \(t_1,\,t_2\)
are arbitrary \(\lambda\)-tableaux  and we write
\(t_1=w_1\tab_\lambda\) and \(t_2=w_2\tab_\lambda\) with
\(w_1,\,w_2\in W_n\), then by definition \(t_1\le t_2\) if and
only if \(w_1\le w_2\), and \(t_1\le_L t_2\) if and only if
\(w_1\le_L w_2\). Similarly, if \(t=w\tab_\lambda\) is an
arbitrary \(\lambda\)-tableau, where \(w\in W_n\), then we define
\(l(t)=l(w)\).

A \(\lambda\)-tableau \(t\), where \(\lambda \in \mathcal P(n)\),
is said to be \textit{column standard\/} if its entries increase
down the columns, that is, if \(t(i,j) < t(i+1,j)\) whenever
\((i,j)\in[\lambda]\) and \((i+1,j)\in[\lambda]\). Similarly,
\(t\) is said to be \textit{row standard\/} if its entries
increase along the rows, that is, if \(t(i,j) < t(i,j+1)\)
whenever \((i,j)\in[\lambda]\) and \((i,j+1)\in[\lambda]\). A
\textit{standard tableau\/} is a tableau that is both column
standard and row standard. We write \(\STD(\lambda)\) for the set
of all standard tableaux for \(\lambda\).

Given \(\lambda\in \mathcal P(n)\) we define \(J_\lambda\) to be
the subset of \(S\) consisting of those simple reflections
\(s_i=(i,i+1)\) such that \(i\) and \(i+1\) lie in the same column
of~\(\tab_\lambda\), and we define \(W_\lambda\) to be the
standard parabolic subgroup of \(W_n\) generated by~\(J_\lambda\).
Thus \(W_\lambda\) is the column stabilizer of~\(\tab_\lambda\).
Moreover, the set of minimal left coset representatives for
\(W_\lambda\) in \(W_n\) is the set
\[
D_\lambda=\{\,d\in W_n\mid di<d(i+1)\text{ whenever \(s_i\in
J_\lambda\)}\,\}
\]
since the condition \(di<d(i+1)\) is equivalent to
\(l(ds_i)>l(d)\). It follows that \(\{\,d\tab_\lambda\mid d\in
D_\lambda\,\}\) is precisely the set of column standard
\(\lambda\)-tableaux.

We have the following result (see for example \cite[Lemma
1.5]{dipjam:heckA}, \cite[Lemma 6.2]{howvan:wgraphDetSets}).
\begin{lem}\label{characterisetstd1}
Let \(\lambda \in\mathcal  P(n)\) and define \(v_\lambda\in W_n\)
by the requirement that \(\tab^\lambda=v_\lambda\tab_\lambda\).
Then \(\STD(\lambda)=\{\,w\tab_\lambda \mid w\le_L v_\lambda\,\} =
\{\,t \in \Tab(\lambda) \mid t\le_L \tab^{\lambda}\,\}\).
\end{lem}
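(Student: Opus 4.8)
The plan is to prove the equality $\STD(\lambda)=\{\,w\tab_\lambda \mid w\le_L v_\lambda\,\}$ first, and then observe that the second equality in the statement is just a restatement of the first under the bijection $w\mapsto w\tab_\lambda$ together with the transferred weak order (indeed $t\le_L\tab^\lambda$ means $w\le_L v_\lambda$ when $t=w\tab_\lambda$, by definition of the transferred order and the defining property $\tab^\lambda=v_\lambda\tab_\lambda$ of $v_\lambda$). So the whole content is in characterising standard tableaux as the weak-order ideal below $\tab^\lambda$.

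First I would recall the combinatorial description of the weak order on tableaux coming from $\le_L$ on $W_n$: writing $t_1\le_L t_2$ means that some reduced word for $w_2$ (where $t_i=w_i\tab_\lambda$) has a reduced word for $w_1$ as a suffix, equivalently $t_1$ is obtained from $t_2$ by successively applying simple transpositions $s_i$ on the left, each step strictly decreasing length, i.e.\ each step swapping two entries $i,i+1$ that are currently ``out of order'' in the relevant sense. Concretely, $s_i t<t$ in the transferred order precisely when $i$ appears in a later column than $i+1$ in $t$ (since $l(s_i w)<l(w)$ iff $w^{-1}i>w^{-1}(i+1)$, and positions in $\tab_\lambda$ are ordered first by column then by row). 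The key point is that $\tab^\lambda$ is the row-reading filling and is the unique maximal element in the weak order among column standard tableaux; I would want the statement (available essentially from the cited \cite[Lemma 1.5]{dipjam:heckA}) that the column standard $\lambda$-tableaux are exactly $\{\,d\tab_\lambda\mid d\in D_\lambda\,\}$, already recorded in the excerpt, and that $v_\lambda\in D_\lambda$ with $\tab^\lambda$ column standard.

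The forward inclusion $\STD(\lambda)\subseteq\{\,t\mid t\le_L\tab^\lambda\,\}$: given a standard tableau $t\ne\tab^\lambda$, I would produce a simple reflection $s_i$ with $s_i t$ still standard and $l(s_it)>l(t)$, thereby inducting on $n-l(t)$ (or on the number of inversions). Since $t\ne\tab^\lambda$ and both are row standard, there is a smallest $i$ whose row index in $t$ differs from its row index in $\tab^\lambda$; one checks $i$ sits strictly below $i-1$'s row position pattern forces $i+1$ to lie in an earlier column than $i$ — more carefully, one argues that the smallest $i$ not in its $\tab^\lambda$-position has $i+1$ located so that swapping $i\leftrightarrow i+1$ keeps row- and column-standardness and increases length, moving $t$ one step closer to $\tab^\lambda$ in weak order. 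For the reverse inclusion, I would show that the set of standard tableaux is \emph{downward closed} under the weak order, i.e.\ if $t$ is standard and $s_it<_L t$ then $s_it$ is standard: swapping $i$ and $i+1$ when $i$ is in a strictly later column than $i+1$ cannot destroy row-standardness (it can only help within each affected row) and a short case analysis shows it preserves column-standardness too; since $\tab^\lambda$ is standard, every $t\le_L\tab^\lambda$ is then standard by descending induction along a reduced word. Combining the two inclusions gives $\STD(\lambda)=\{\,t\mid t\le_L\tab^\lambda\,\}$, and translating back through the bijection yields $\{\,w\tab_\lambda\mid w\le_L v_\lambda\,\}$.

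The main obstacle I expect is the bookkeeping in the forward direction: carefully exhibiting, for an arbitrary standard $t\ne\tab^\lambda$, a left multiplication by some $s_i$ that both increases length and preserves standardness, and checking the induction terminates at $\tab^\lambda$ (not merely at some maximal standard tableau). The cleanest route is probably to show directly that $\tab^\lambda$ is the \emph{unique} maximal element of $(\STD(\lambda),\le_L)$: any standard $t$ has a length-increasing standard neighbour unless $t=\tab^\lambda$, which forces the weak-order ideal generated downward from $\tab^\lambda$ to contain all of $\STD(\lambda)$. The column-standard characterisation quoted from \cite{dipjam:heckA} does most of the heavy lifting for identifying $\tab^\lambda$ with $v_\lambda\tab_\lambda$ and locating it at the top of the weak order, so the remaining work is the elementary tableau combinatorics sketched above.
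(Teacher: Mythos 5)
The paper gives no proof of this lemma at all --- it simply cites \cite[Lemma 1.5]{dipjam:heckA} and \cite[Lemma 6.2]{howvan:wgraphDetSets} --- so there is no in-paper argument to compare against; what matters is whether your direct combinatorial proof stands on its own. Your reduction of the two displayed equalities to one, your translation of left-multiplication by $s_i$ into swapping the entries $i$ and $i+1$, and your backward direction are all sound: if $t$ is standard and $l(s_it)<l(t)$ then $i$ and $i+1$ must occupy different rows and different columns (same column would put $i$ below $i+1$, same row would put $i$ to the right of $i+1$), and swapping two consecutive values sitting in non-adjacent boxes preserves standardness, so $\STD(\lambda)$ is downward closed under left descents and every $t\le_L\tab^\lambda$ is standard.

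The gap is in the forward direction: the witness you propose --- the smallest $i$ not in its $\tab^\lambda$-position --- does not work. Take $\lambda=(3,2)$, so that $\tab_\lambda$ has rows $1,3,5$ and $2,4$ while $\tab^\lambda$ has rows $1,2,3$ and $4,5$, and let $t$ be the standard tableau with rows $1,2,5$ and $3,4$. The smallest misplaced entry is $i=3$ (it sits at $(2,1)$ instead of $(1,3)$), but $3$ and $4$ are horizontally adjacent in $t$, so $s_3t$ is not even row standard; the move that works here is $s_4$. A choice that always works is any $j$ with $\row_t(j)>\row_t(j+1)$: such a $j$ exists whenever $t\ne\tab^\lambda$ (if row indices were weakly increasing in the entry, row standardness would force $t=\tab^\lambda$); the two entries cannot share a column (column standardness would give $\row_t(j)<\row_t(j+1)$); and $\col_t(j)<\col_t(j+1)$, since otherwise the box of $j+1$ would lie weakly north-west of the box of $j$ and standardness would force $j+1<j$. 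Hence $s_jt$ is standard, and since the box of $j$ precedes the box of $j+1$ in the column-reading order of $\tab_\lambda$ we get $l(s_jt)=l(t)+1$. With this replacement your ascent chain terminates at $\tab^\lambda$ as the unique maximal standard tableau and the argument closes; as written, the key existence step rests on a false claim.
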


The Robinson-Schensted algorithm associates each \(w \in W_{n}\)
with an ordered pair of standard tableaux of the same shape
\(\lambda\) for some \(\lambda \in \mathcal P(n)\). Moreover, this
gives a bijection from \(W_n\) to the set of all such pairs.
\begin{thr}\label{rs}
The Robinson-Schensted map \(w \mapsto RS(w) = (P(w),Q(w))\) is a
bijection from \(W_{n}\) to \(\{\,(t,u)\in\mathcal P(n)^2\mid
\shape(t)=\shape(u)\,\}\).
\end{thr}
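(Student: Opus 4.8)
The plan is to prove the theorem in the classical way, by exhibiting the Schensted insertion algorithm as a concrete description of $w\mapsto RS(w)$ and then constructing an explicit two-sided inverse. First I would recall the \emph{row-insertion} operation. Given a standard tableau $P$ whose entries form a subset of $[1,n]$ and an integer $x$ not occurring in $P$, we insert $x$ into the first row of $P$: if $x$ exceeds every entry in that row it is appended at the right-hand end and the process stops; otherwise $x$ replaces the smallest entry of that row exceeding it, and that displaced (``bumped'') entry is then inserted into the second row by exactly the same rule, and so on down the tableau. A short induction on the number of rows shows that this process terminates, that the resulting array $P\leftarrow x$ is again a standard tableau, and that its shape is obtained from $\shape(P)$ by adjoining a single box, whose position we call the \emph{new corner}.

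Next I would define $RS(w)$ for $w\in W_n$ from the one-line sequence $w1,\,w2,\,\dots,\,wn$. Beginning with the pair of empty tableaux, at the $i$-th stage we pass from the current pair $(P_{i-1},Q_{i-1})$ to $(P_{i-1}\leftarrow wi,\;Q_i)$, where $Q_i$ is obtained from $Q_{i-1}$ by writing the integer $i$ in the box that is the new corner of $P_{i-1}\leftarrow wi$. Put $P(w)=P_n$ and $Q(w)=Q_n$. An immediate induction gives $\shape(P(w))=\shape(Q(w))\in\mathcal P(n)$; that $P(w)$ is standard follows from the first paragraph; and $Q(w)$ is standard because the integers $1,2,\dots,n$ are entered into it in increasing order while each successive new corner is an outer corner of the current diagram, so entries increase both along rows and down columns. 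Hence $RS$ is a well-defined map from $W_n$ into the stated set of ordered pairs of standard tableaux of equal shape.

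Finally I would show that $RS$ is a bijection by constructing its inverse via \emph{reverse insertion}. Given standard tableaux $P,Q$ of a common shape $\lambda\in\mathcal P(n)$, the entry $n$ of $Q$ lies in some outer corner $c$ of $[\lambda]$, and this corner must have been adjoined at the last forward stage. Reverse-inserting $P$ at $c$ — delete the entry sitting in box $c$, carry it into the row above by displacing the largest entry there smaller than it, iterate upward, and eject a value from the first row — yields a standard tableau $P'$ of shape $\lambda$ with $c$ removed, together with the ejected value, which we record as $wn$; deleting $n$ from $Q$ gives a standard tableau $Q'$, and recursing on $(P',Q')$ recovers $w(n-1),\,\dots,\,w1$. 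The crux of the argument, and the step I expect to be the main obstacle, is the lemma that reverse row-insertion genuinely inverts forward row-insertion: if $P\leftarrow x=P^{*}$ with new corner $c$, then reverse-inserting $P^{*}$ at $c$ returns exactly the pair $(P,x)$, and, symmetrically, forward insertion followed by reverse insertion at the new corner is the identity. This is proved by comparing the two bumping paths row by row and checking that the descending route traced out by the forward insertion coincides with the ascending route traced out by the reverse insertion, so that the two operations undo one another box by box. Granting the lemma, forward and reverse insertion are mutually inverse maps between $W_n$ and the set of ordered pairs of standard tableaux of a common shape in $\mathcal P(n)$, so $RS$ is the asserted bijection.
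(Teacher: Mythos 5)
Your proposal is correct and is essentially the argument the paper relies on: the paper offers no proof of its own for this theorem, citing \cite[Theorem 3.1.1]{sag:sym}, and the proof there is exactly the insertion/recording construction with reverse row-insertion as the explicit inverse, hinging on the same key lemma that reverse insertion undoes forward insertion. (You also silently correct a typo in the statement: the codomain should be pairs of \emph{standard tableaux} of equal shape, not pairs of partitions.)
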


See, for example, \cite[Theorem 3.1.1]{sag:sym}. Details of the
algorithm can also be found (for example) in \cite[Section
3.1]{sag:sym}.

The following lemma, the proof of which relies on the details of
the Robinson-Schensted algorithm, is of crucial importance to us.

\begin{lem}\label{rstlambda}
Let \(\lambda\in\mathcal P(n)\) and let \(w\in W_n\). Then
\(RS(w)=(t,\tab_\lambda)\) for some \(t\in\STD(\lambda)\) if and
only if \(w=vw_{J_\lambda}\) for some \(v\in W\) such that
\(v\tab_\lambda\in\STD(\lambda)\). When these conditions hold,
\(t=v\tab_\lambda\).
\end{lem}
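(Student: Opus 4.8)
The statement is an ``if and only if'' characterizing, for a fixed $\lambda\in\mathcal P(n)$, those $w\in W_n$ whose Robinson--Schensted image has recording tableau exactly $\tab_\lambda$. The natural approach is to run the Robinson--Schensted correspondence and track how the factorization $w=vw_{J_\lambda}$ interacts with insertion and recording. The plan is to prove the ``only if'' direction first and then obtain the ``if'' direction by a dimension/counting argument using bijectivity (Theorem~\ref{rs}) and Lemma~\ref{characterisetstd1}.

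First I would recall the combinatorial input we are allowed to use. By Lemma~\ref{characterisetstd1}, $\STD(\lambda)=\{\,v\tab_\lambda\mid v\le_L v_\lambda\,\}$, so the set of $v\in W_n$ with $v\tab_\lambda\in\STD(\lambda)$ is exactly the interval $[1,v_\lambda]_L$, which has $|\STD(\lambda)|$ elements. Hence the right-hand side of the asserted equivalence describes a set of at most $|\STD(\lambda)|$ permutations $w$, namely $\{\,vw_{J_\lambda}\mid v\le_L v_\lambda\,\}$ (the map $v\mapsto vw_{J_\lambda}$ is injective). On the other hand, by Theorem~\ref{rs} the set $\{\,w\in W_n\mid Q(w)=\tab_\lambda\,\}$ is in bijection with $\STD(\lambda)$ via $w\mapsto P(w)$, so it also has exactly $|\STD(\lambda)|$ elements. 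Therefore it suffices to prove just \emph{one} containment together with the identification $t=v\tab_\lambda$; the reverse containment is then forced by equality of cardinalities.

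For the substantive containment I would argue as follows. Take $w=vw_{J_\lambda}$ with $v\tab_\lambda\in\STD(\lambda)$, i.e.\ $v\le_L v_\lambda$; I want to show $Q(w)=\tab_\lambda$ and $P(w)=v\tab_\lambda$. The key is to understand the one-line (or rather, the Robinson--Schensted) image of $w_{J_\lambda}$ itself: since $W_{J_\lambda}=W_\lambda$ is the column stabilizer of $\tab_\lambda$, its longest element $w_{J_\lambda}$ reverses each column block, and inserting $w_{J_\lambda}$ via RS produces precisely $P(w_{J_\lambda})=Q(w_{J_\lambda})=\tab_\lambda$ (this is the standard fact that the longest element of a Young subgroup of column type RS-inserts to the superstandard column tableau $\tab_\lambda$ with itself as recording tableau). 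Now multiplying on the left by $v$ with $l(vw_{J_\lambda})=l(v)+l(w_{J_\lambda})$ — which holds because $w_{J_\lambda}\in W_{J_\lambda}$ and $v\in D_{J_\lambda}$ by Lemma~\ref{characterisetstd1} (the column-standard tableaux are exactly $\{\,d\tab_\lambda\mid d\in D_\lambda\,\}$, and $v\le_L v_\lambda$ forces $v\tab_\lambda$ standard, hence column standard, hence $v\in D_\lambda$) — has a controlled effect: left multiplication by simple reflections that lengthen the word corresponds, on the RS side, to leaving the recording tableau $Q$ unchanged and acting on $P$ by the corresponding tableau operation, so that $P(vw_{J_\lambda})=v\cdot P(w_{J_\lambda})=v\tab_\lambda$ (using the action of $W_n$ on tableaux defined in the paper) while $Q(vw_{J_\lambda})=Q(w_{J_\lambda})=\tab_\lambda$. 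Since $v\tab_\lambda$ is standard of shape $\lambda$, this gives $RS(w)=(v\tab_\lambda,\tab_\lambda)$ with $t=v\tab_\lambda\in\STD(\lambda)$, as required.

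\textbf{Main obstacle.} The delicate point is the claim that left multiplication by $v$ (when it is length-additive with $w_{J_\lambda}$) transforms $RS$ by $RS(vw_{J_\lambda})=(v\cdot P(w_{J_\lambda}),\,Q(w_{J_\lambda}))$ — i.e.\ that the left action of $W_n$ on permutations intertwines, via RS, with the left action of $W_n$ on the first (insertion) tableau, at least along length-additive products with $w_{J_\lambda}$. This is exactly where ``the proof relies on the details of the Robinson--Schensted algorithm'': one must invoke the well-known symmetry $RS(w^{-1})=(Q(w),P(w))$ to convert left multiplication into right multiplication, and then use the classical fact (Schensted / Knuth) that right multiplication by a simple reflection $s_i$ with $l(ws_i)>l(w)$ leaves $P$ unchanged and performs an elementary ``jeu-de-taquin''-type modification of $Q$ encoded by the $W_n$-action on standard tableaux. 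Getting these compatibilities stated precisely, and checking that the length-additivity hypothesis $l(vw_{J_\lambda})=l(v)+l(w_{J_\lambda})$ is genuinely available (which it is, since $v\in D_{J_\lambda}$), is the crux; everything else is the counting argument above, which is routine once Lemma~\ref{characterisetstd1} and Theorem~\ref{rs} are in hand.
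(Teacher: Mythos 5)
Your counting reduction is sound and is a genuinely economical observation: since $v\mapsto v\tab_\lambda$ is a bijection onto $\Tab(\lambda)$ and $v\mapsto vw_{J_\lambda}$ is injective, the right-hand set has exactly $\abs{\STD(\lambda)}$ elements, and by Theorem~\ref{rs} so does $\{\,w\mid Q(w)=\tab_\lambda\,\}$; hence one containment plus the identification $t=v\tab_\lambda$ would indeed suffice. Your computation of $RS(w_{J_\lambda})=(\tab_\lambda,\tab_\lambda)$ is also correct. The paper instead proves both implications directly, but that difference is harmless.

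The gap is in the step you yourself flag as the crux. The ``classical fact'' you invoke --- that right multiplication by a simple reflection $s_i$ with $l(ws_i)>l(w)$ leaves $P$ unchanged (equivalently, via $RS(w^{-1})=(Q(w),P(w))$, that left multiplication by a length-increasing simple reflection leaves $Q$ unchanged) --- is false. Already in $W_2$: $w=1$ has $P(w)$ a single row of shape $(2)$, while $ws_1$ has $P(ws_1)$ a single column of shape $(1,1)$, yet $l(ws_1)>l(w)$. The true invariance statements here are the Knuth and dual Knuth relations, and an elementary (dual) Knuth transformation is \emph{not} an arbitrary length-increasing adjacent swap: it requires a condition on where a third value ($i-1$ or $i+2$) sits relative to $i$ and $i+1$. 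So the inductive step ``multiply $w_{J_\lambda}$ on the left by one more letter of a reduced word for $v$; $Q$ is preserved and $P$ transforms by the tableau action'' does not follow from anything you have cited, and as a general principle it is simply wrong. The conclusion happens to be true in this situation, but establishing it is exactly the content of the lemma; you cannot get it by appeal to length-additivity alone. The paper's route avoids this entirely: writing $t=v\tab_\lambda=w(w_{J_\lambda}\tab_\lambda)$, standardness of $t$ forces $w1>w2>\cdots>w\lambda_1'$ (and similarly for each subsequent column block), and one then tracks the row-insertion of the word $(w1,\dots,wn)$ directly to see that each block fills one column of the insertion tableau, so that $Q(w)=\tab_\lambda$ and $P(w)=v\tab_\lambda$. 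To repair your argument you would either reproduce that direct insertion analysis, or run your induction on $l(v)$ while verifying at each step that the relevant elementary dual Knuth transformation actually applies --- a genuine verification, not a citation.
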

\begin{proof}
Write \(\lambda'\) for the partition conjugate to~\(\lambda\), so
that \(\lambda_j'\) is the number of boxes in the \(j\)-th column
of the Yound diagram~\([\lambda]\). Recall that \(W_{J_\lambda}\)
is the column stabilizer of \(\tab_\lambda\); hence it can be seen
that \(w_{J_\lambda}\tab_\lambda\) is obtained from
\(\tab_\lambda\) by reversing the orders of the numbers in each of
the columns. That is, the numbers \(1,\,2,\,\dots,\,\lambda_1'\)
fill the first column of the tableau \(w_{J_\lambda}\tab_\lambda\)
in order from bottom to top, then the numbers
\(\lambda_1'+1,\,\lambda_1'+2,\,\dots,\,\lambda_1'+\lambda_2'\)
fill the next column from bottom to top, and so on.

Recall that \(RS(w)\) is obtained by applying the row-insertion
and recording process successively to the terms of the sequence
\((w1,w2,\ldots,wn)\). Suppose that \(RS(w)=(t,\tab_\lambda)\), so
that the recording tableau is~\(t_\lambda\). Since the numbers
\(1,\,2,\,\ldots,\,\lambda_1'\) make up the first column
of~\(\tab_\lambda\), the first \(\lambda_1'\) insertions must go
into the first column of the insertion tableau. This means that
\(w1>w2>\cdots>w{\lambda_1'}\), since each successive one of these
bumps the preceding one into the next row, and the result is that
the first column of \(t\) contains the numbers
\(w1,w2,\ldots,w{\lambda_1'}\) in order from bottom to top. In
other words, for \(1\le i\le\lambda_1'\), the position of \(wi\)
in \(t\) is the same as the position of \(i\)
in~\(w_{J_\lambda}\tab_\lambda\). Similarly, the next
\(\lambda_2'\) insertions must go into the second column of~\(t\),
and we conclude that for \(\lambda_1'+1\le
i\le\lambda_1'+\lambda_2'\) the position of \(wi\) in \(t\) is the
same as the position of \(i\) in~\(w_{J_\lambda}\tab_\lambda\).
Clearly the same reasoning applies to all the columns, and it
follows that \(t\) is obtained from \(w_{J_\lambda}\tab_\lambda\)
by replacing \(i\) by \(wi\) for all \(i\in[1,n]\). In other
words, \(t=ww_{J_\lambda}\tab_\lambda\), as required.

Conversely, suppose that \(w=vw_{J_\lambda}\), where
\(t=v\tab_\lambda\) is a standard tableau. Then
\(w(w_{J_\lambda}\tab_\lambda)=t\), and it follows that
\(w1>w2>\cdots>w\lambda_1'\). This in turn implies that the first
\(\lambda_1'\) insertions go into the first column. Similarly the
next \(\lambda_2'\) insertions go into the second column, and so
on, and it follows that the recording tableau is~\(\tab_\lambda\),
as required.
\end{proof}

The following result is well-known. (See \cite[Theorem
1.4]{kazlus:coxhecke} and \cite[Theorem A]{ariki:RSleftcells}.)
\begin{thr}\label{kl}
If \(t\) is a fixed tableau, then the set \(\mathcal{C}(t) = \{\,w
\in W_{n} \mid Q(w) = t\,\}\) is a left cell, and if \(t'\) is
another tableau then \(\mathcal{C}(t')\) is isomorphic to
\(\mathcal{C}(t)\) if and only if \(t'\) and \(t\) are of the same
shape.
\end{thr}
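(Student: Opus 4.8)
The statement is Theorem~1.4 of \cite{kazlus:coxhecke} combined with Theorem~A of \cite{ariki:RSleftcells}, and the plan is to outline how both assertions flow from the Kazhdan--Lusztig calculus of \emph{star operations} together with the Robinson--Schensted combinatorics. Recall that to each pair \(s,\,t\in S\) with \(st\) of order~\(3\) one attaches two partially-defined involutions of \(W_n\): a \emph{left} star operation \(w\mapsto w^{*}\), defined whenever \(\{s,t\}\cap\mathcal L(w)\) is a singleton, and a \emph{right} star operation, defined whenever \(\{s,t\}\cap\mathcal R(w)\) is a singleton, where \(\mathcal R(w)=\{\,r\in S\mid l(wr)<l(w)\,\}\). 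The ingredients I would take from \S2 and~\S4 of \cite{kazlus:coxhecke} are: \textbf{(A)} if \(w\mapsto w^{*}\) is a left star operation with \(\{s,t\}\cap\mathcal L(w)=\{s\}\), then \(\{s,t\}\cap\mathcal L(w^{*})=\{t\}\) and \(w,\,w^{*}\) are adjacent in the Bruhat order, whence (because \(\mathcal L\) is the \(\tau\)-invariant of \(\Gamma(W)\) and \(\mu_{w,w^{*}}=1\) for a Bruhat cover) \(w\) and \(w^{*}\) point to one another in \(\Gamma(W)\) and so lie in the same left cell; and \textbf{(B)} a right star operation, where it is everywhere-defined on a left cell \(\mathcal C\) --- which, since \(\mathcal R(w)\) depends only on \(Q(w)\), happens at every vertex of \(\mathcal C\) as soon as it happens at one --- carries \(\mathcal C\) bijectively onto a left cell \(\mathcal C^{*}\) by a map that preserves the weights \(\mu\) and the \(\tau\)-invariants, i.e.\ by an isomorphism of \(W\!\)-graphs. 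I would also use the Sch\"utzenberger symmetry \(Q(w)=P(w^{-1})\) together with the fact that a left star operation on \(w\) is a right star operation on \(w^{-1}\): thus a left star operation fixes \(Q(w)\) and changes the one-line word of \(w^{-1}\), equivalently \(P(w)\), by an elementary Knuth move, and dually a right star operation fixes \(P(w)\) and changes \(Q(w)\).

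The first step is to show that \(\mathcal C(t)\) is contained in one left cell. If \(Q(w)=Q(w')=t\) then \(P(w^{-1})=P(w'^{-1})\), so the one-line words of \(w^{-1}\) and \(w'^{-1}\) are Knuth (plactic) equivalent; a chain of elementary Knuth moves between them is, by the preceding remarks, realised by a chain of left star operations carrying \(w\) to \(w'\), and by~\textbf{(A)} all these elements lie in a single left cell. Since by Theorem~\ref{rs} the sets \(\mathcal C(t)\), as \(t\) ranges over all standard tableaux, partition \(W_n\), it follows that every left cell is a disjoint union of sets \(\mathcal C(t)\); and since \(w\in\mathcal C(t)\) forces \(Q(w)=t\), and \(P(w)\) has the same shape as \(Q(w)\), all the \(\mathcal C(t)\) comprising a fixed left cell share a common shape~\(\lambda\).

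The reverse inclusion --- that a left cell cannot properly contain a \(\mathcal C(t)\) --- is the main obstacle. Fix a partition \(\lambda\) and put \(\Omega_\lambda=\{\,w\in W_n\mid P(w)\text{ has shape }\lambda\,\}\); by Theorem~\ref{rs}, \(\Omega_\lambda\) is the disjoint union of the \(f^\lambda\) sets \(\mathcal C(t)\) with \(\shape(t)=\lambda\), where \(f^\lambda=\abs{\STD(\lambda)}\), each of cardinality \(f^\lambda\), so \(\abs{\Omega_\lambda}=(f^\lambda)^2\), and by the previous step \(\Omega_\lambda\) is a union of left cells, each a union of some of these \(\mathcal C(t)\). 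Now the left regular \(\mathcal H\)-module is filtered with subquotients the cell modules of the individual left cells; specialising \(q\mapsto 1\) and invoking the key fact --- which is the genuine substance of \cite{kazlus:coxhecke} and \cite{ariki:RSleftcells} --- that in type~\(A\) the cell module of a left cell inside \(\Omega_\lambda\) is irreducible, isomorphic to the Specht module \(S^\lambda\) and hence of dimension \(f^\lambda\), I would conclude that each left cell inside \(\Omega_\lambda\) has exactly \(f^\lambda\) vertices and therefore coincides with one of the \(\mathcal C(t)\). This proves that each \(\mathcal C(t)\) is a left cell.

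For the isomorphism statement, first suppose \(\shape(t)=\shape(t')=\lambda\). By Theorem~\ref{rs} choose \(w,\,w'\in W_n\) with \(P(w)=P(w')\), \(Q(w)=t\) and \(Q(w')=t'\); then \(w\) and \(w'\) have a common insertion tableau, hence are Knuth equivalent, hence are joined by a chain of everywhere-defined right star operations (here one uses the companion combinatorial fact from \cite{kazlus:coxhecke} and \cite{ariki:RSleftcells} that the permutations with a given insertion tableau form a single right-star-equivalence class), and by~\textbf{(B)} the corresponding chain of \(W\!\)-graph isomorphisms carries the left cell \(\mathcal C(t)\) of \(w\) onto the left cell \(\mathcal C(t')\) of \(w'\); thus \(\mathcal C(t)\cong\mathcal C(t')\). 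Conversely, an isomorphism \(\mathcal C(t)\cong\mathcal C(t')\) of \(W\!\)-graphs gives an isomorphism of the underlying \(\mathcal H\)-modules, hence, after specialising \(q\mapsto 1\), an isomorphism \(S^{\shape(t)}\cong S^{\shape(t')}\) of \(\mathbb C W_n\)-modules; since Specht modules attached to distinct partitions are pairwise non-isomorphic, \(\shape(t)=\shape(t')\).
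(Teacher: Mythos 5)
The paper does not actually prove Theorem~\ref{kl}: it is quoted as a known result, with the proof deferred entirely to \cite[Theorem~1.4]{kazlus:coxhecke} and \cite[Theorem~A]{ariki:RSleftcells}. So there is no internal argument to compare yours against; what can be assessed is whether your sketch would stand on its own. The easy halves are fine: your point (A) correctly shows that a left star operation (equivalently, a dual Knuth move) joins \(w\) to \(*w\) inside one left cell, so each \(\mathcal C(t)\) lies in a single left cell; and your use of (B), that everywhere-defined right star operations induce \(W\!\)-graph isomorphisms between left cells and realise Knuth equivalence of insertion words, correctly yields \(\mathcal C(t)\cong\mathcal C(t')\) when \(\shape(t)=\shape(t')\). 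These are genuinely the arguments of \cite{kazlus:coxhecke} and \cite{ariki:RSleftcells} for those parts.

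The gap is in the reverse inclusion. First, the sentence ``since \(w\in\mathcal C(t)\) forces \(Q(w)=t\), and \(P(w)\) has the same shape as \(Q(w)\), all the \(\mathcal C(t)\) comprising a fixed left cell share a common shape'' is a non sequitur: nothing said so far prevents a single left cell from containing \(\mathcal C(t)\) and \(\mathcal C(t')\) with \(\shape(t)\neq\shape(t')\), and without that you cannot assert that \(\Omega_\lambda\) is a union of left cells, which your dimension count requires. Second, and more seriously, the ``key fact'' you invoke to finish --- that each left cell module in type \(A\) is irreducible of dimension \(f^\lambda\) --- is not an independent input: in the literature it is \emph{deduced from} the identification of left cells with the sets \(\mathcal C(t)\) (e.g.\ via the Murphy basis as in \cite{geck:klMurphy}, or \cite{mcpal:klrepSym}), so invoking it here is circular. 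A repair is available along your own lines: granting only that the cell modules filter the regular representation and that at \(q=1\) the regular representation has exactly \(\sum_\lambda f^\lambda\) irreducible constituents counted as a sum of irreducibles, irreducibility of every cell module would force the number of left cells to equal \(\sum_\lambda f^\lambda=\lvert\{\mathcal C(t)\}\rvert\), whence each left cell is exactly one \(\mathcal C(t)\) by counting, with no need for shape-constancy. But one must then supply a non-circular proof of that irreducibility (or follow the purely combinatorial induction of \cite{kazlus:coxhecke}), and as written your sketch does neither.
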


By Theorem~\ref{kl} the set \(\mathcal C_\lambda = \{\,w\in
W_n\mid RS(w)=(t,\tab_\lambda) \text{ for some
\(t\in\STD(\lambda)\)}\,\}\) is a left cell in \(W_n\), and by
Lemma~\ref{rstlambda} it contains \(w_J\), where \(J=J_\lambda\).
Hence it follows from Theorem~\ref{wgdetsetcell} that \(\mathcal
C_\lambda=\mathscr I\!_\lambda w_J\) where \(\mathscr
I\!_\lambda\) is a \(W\!\)-graph ideal with respect to~\(J\), and
\(\mathcal C_1^J=\{\,c^J_w\mid w\in \mathscr I\!_\lambda\,\}\) is
the maximal cell of~\(\Gamma(C_J)\). Furthermore,
Lemma~\ref{rstlambda} also shows that \(\mathscr
I\!_\lambda=\{\,v\in W\mid v\tab_\lambda\in\STD(\lambda)\,\}\),
and by Lemma~\ref{characterisetstd1} this is the ideal of
\((W,\leq_L)\) generated by the element \(v_\lambda\) such that
\(v_\lambda\tab_\lambda=\tab^\lambda\).

Thus we have proved the following results.

\begin{prop}\label{leftcellwj} Let \(\lambda\in\mathcal P(n)\) and let
\(W_J\) be the column stabilizer of~\(\tab_\lambda\). Let
\(v_\lambda\) be the element of~\(W_n\) such that
\(v_\lambda\tab_\lambda=\tab^\lambda\) and let \(\mathscr
I\!_\lambda=\{\,v\in W\mid v\leq_L v_\lambda\,\}\). Then the
Kazhdan-Lusztig left cell that contains \(w_J\) is \(\mathscr
{I}\!_\lambda w_J\).
\end{prop}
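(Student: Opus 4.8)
The plan is to pin down the Kazhdan--Lusztig left cell containing $w_J$ by means of the Robinson--Schensted correspondence, and then to rewrite it in the stated form using the two combinatorial lemmas above; here $J=J_\lambda$ and $w_J=w_{J_\lambda}$. By Theorem~\ref{kl} the left cells of $W_n$ are exactly the fibres of the recording map $w\mapsto Q(w)$, so the cell containing $w_J$ is $\mathcal C(Q(w_J))$. To evaluate $Q(w_J)$ I would invoke Lemma~\ref{rstlambda} with $v=1$: since $\tab_\lambda$ is itself standard (equivalently, $1\le_L v_\lambda$, using Lemma~\ref{characterisetstd1}), the ``converse'' half of that lemma gives $RS(w_{J_\lambda})=(\tab_\lambda,\tab_\lambda)$, so $Q(w_J)=\tab_\lambda$. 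Hence $w_J$ lies in $\mathcal C(\tab_\lambda)=\{\,w\in W_n\mid RS(w)=(t,\tab_\lambda)\text{ for some }t\in\STD(\lambda)\,\}$, and this is the left cell we must identify.

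The second step is to put $\mathcal C(\tab_\lambda)$ into coset form. By the forward direction of Lemma~\ref{rstlambda}, $w\in\mathcal C(\tab_\lambda)$ if and only if $w=vw_{J_\lambda}$ for some $v\in W_n$ with $v\tab_\lambda\in\STD(\lambda)$; and since right translation by $w_{J_\lambda}$ is a bijection of $W_n$, this is precisely the assertion $\mathcal C(\tab_\lambda)=Xw_J$, where $X=\{\,v\in W_n\mid v\tab_\lambda\in\STD(\lambda)\,\}$. The third step is to recognise $X$ as $\mathscr I\!_\lambda$: Lemma~\ref{characterisetstd1} says $\STD(\lambda)=\{\,w\tab_\lambda\mid w\le_L v_\lambda\,\}$, and, the map $w\mapsto w\tab_\lambda$ being injective, this gives $X=\{\,v\in W_n\mid v\le_L v_\lambda\,\}=\mathscr I\!_\lambda$. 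Combining the last two descriptions yields $\mathcal C(\tab_\lambda)=\mathscr I\!_\lambda w_J$, which is the claim; and $w_J=1\cdot w_J$ does lie in this set since $1\le_L v_\lambda$.

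If one also wants the refinement already recorded in the surrounding discussion, namely that this $\mathscr I\!_\lambda$ is a $W\!$-graph ideal with respect to $J$ and that $\{\,c^J_w\mid w\in\mathscr I\!_\lambda\,\}$ is the maximal cell of $\Gamma(C_J)$, it follows at once from Theorem~\ref{wgdetsetcell} once the cell has been identified, but this is not needed for the bare statement. All the genuine content sits in Lemma~\ref{rstlambda}, whose proof rests on a careful column-by-column analysis of the insertion procedure (the point being that forcing $\tab_\lambda$ to be the recording tableau forces the source permutation to factor through $w_{J_\lambda}$); granting that lemma, the remaining steps are routine bookkeeping, so I expect no further obstacle.
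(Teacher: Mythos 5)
Your proof is correct and follows essentially the same route as the paper: identify the cell containing $w_J$ as the Robinson--Schensted fibre $\mathcal C(\tab_\lambda)$ via Theorem~\ref{kl} and Lemma~\ref{rstlambda}, rewrite it as $Xw_J$ with $X=\{\,v\mid v\tab_\lambda\in\STD(\lambda)\,\}$ by the forward half of Lemma~\ref{rstlambda}, and identify $X$ with $\mathscr I\!_\lambda$ by Lemma~\ref{characterisetstd1}. The only (harmless) difference is that you obtain the set-theoretic identity directly from Lemma~\ref{rstlambda} without routing through Theorem~\ref{wgdetsetcell}, which the paper invokes chiefly to get the accompanying $W\!$-graph-ideal refinement that you correctly defer.
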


\begin{thr}\label{wdetelmA} Let \(\lambda\in\mathcal P(n)\) and let \(W_J\) be the
column stabilizer of~\(\tab_\lambda\). Then the element
\(v_{\lambda}\in W_n\) such that
\(v_\lambda\tab_\lambda=\tab^\lambda\) is a \(W\!\)-graph
determining element, and its \(W\!\)-graph is isomorphic to the
\(W\!\)-graph of the left cell that contains \(w_{J}\).
\end{thr}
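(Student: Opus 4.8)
The plan is to recognize this statement as a repackaging of Proposition~\ref{leftcellwj} and Theorem~\ref{wgdetsetcell} in the special case $J=J_\lambda$, the column stabilizer of $\tab_\lambda$. Write $\mathscr I_\lambda=\{\,v\in W_n\mid v\le_L v_\lambda\,\}$ for the principal ideal of $(W_n,\le_L)$ generated by $v_\lambda$. By Lemma~\ref{characterisetstd1}, applied through the bijection $w\mapsto w\tab_\lambda$ used to transport $\le_L$ to $\Tab(\lambda)$, this ideal is precisely $\{\,v\in W_n\mid v\tab_\lambda\in\STD(\lambda)\,\}$; since the column standard $\lambda$-tableaux are exactly the $d\tab_\lambda$ with $d\in D_J$, we have $\mathscr I_\lambda\subseteq D_J$, so it makes sense to ask whether $\mathscr I_\lambda$ is a $W\!$-graph ideal with respect to $J$.

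The first step is to show that $v_\lambda$ is a $W\!$-graph determining element. By Proposition~\ref{leftcellwj}, the Kazhdan--Lusztig left cell $\mathcal C$ containing $w_J$ equals $\mathscr I_\lambda w_J$. Since $W_J$ is a parabolic subgroup of the finite group $W_n$, it is finite, so Theorem~\ref{wgdetsetcell} applies and gives $\mathcal C=\mathscr I_1 w_J$ for some $W\!$-graph ideal $\mathscr I_1\subseteq D_J$ with respect to $J$, with $\mathcal C_1^J=\{\,c^J_w\mid w\in\mathscr I_1\,\}$ the maximal cell of $\Gamma(C_J)$. Because right multiplication by $w_J$ is a bijection from $D_J$ onto $D_J w_J$, the equality $\mathscr I_1 w_J=\mathscr I_\lambda w_J$ forces $\mathscr I_1=\mathscr I_\lambda$. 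Hence $\mathscr I_\lambda=\{\,v\mid v\le_L v_\lambda\,\}$ is a $W\!$-graph ideal with respect to $J$, which is exactly the assertion that $v_\lambda$ is a $W\!$-graph determining element.

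It remains to identify the associated $W\!$-graphs. By Theorem~\ref{wgdetsetcell} (equivalently, Lemma~\ref{bottomcellofI0} applied to the $W\!$-graph ideal $D_J$), the $W\!$-graph built by the algorithm of Theorem~\ref{main-wg} from the $W\!$-graph ideal $\mathscr I_\lambda=\mathscr I_1$ is exactly that of the maximal cell $\mathcal C_1^J$ of $\Gamma(C_J)$. On the other hand, the discussion following Lemma~\ref{geck1} exhibits the map $c^J_w\mapsto c_{ww_J}$ as an isomorphism of $\Gamma(C_J)$ onto the full subgraph of the Kazhdan--Lusztig $W\!$-graph $\Gamma(W)$ supported on $D_Jw_J$, preserving $\tau$-invariants and edge weights and hence cells and their order; under it $c^J_1$---which lies in the maximal cell of $\Gamma(C_J)$ by Lemma~\ref{mincell}---is sent to $c_{w_J}$. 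Therefore $\mathcal C_1^J$ is carried isomorphically onto the cell of $\Gamma(W)$ containing $c_{w_J}$, namely the left cell $\mathcal C$ that contains $w_J$. Composing the two isomorphisms, the $W\!$-graph of $v_\lambda$ is isomorphic to the $W\!$-graph of that left cell.

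I do not expect a genuine obstacle: the theorem is essentially a synthesis of results already proved, and every ingredient---the tableau description of $\STD(\lambda)$ in Lemma~\ref{characterisetstd1}, the Robinson--Schensted identity underlying Proposition~\ref{leftcellwj}, the finiteness of $W_J$, and the cell/ideal correspondence of Section~\ref{descr}---is in hand. The only care required is bookkeeping: one must note that the ideal $\mathscr I_1$ produced by Theorem~\ref{wgdetsetcell} coincides with the set $\mathscr I_{\mathcal C_1}$ of Lemma~\ref{bottomcellofI0} (which it does, by the way Theorem~\ref{wgdetsetcell} was deduced), and that ``the $W\!$-graph of $v_\lambda$'' is understood to mean the $W\!$-graph constructed from the $W\!$-graph ideal it generates via Theorem~\ref{main-wg}.
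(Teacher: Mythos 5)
Your proposal is correct and follows essentially the same route as the paper: the paper derives Proposition~\ref{leftcellwj} and Theorem~\ref{wdetelmA} together from Theorem~\ref{kl}, Lemma~\ref{rstlambda}, Theorem~\ref{wgdetsetcell} and Lemma~\ref{characterisetstd1}, identifying \(\mathscr I_1\) with \(\mathscr I_\lambda\) exactly as you do, and obtains the graph isomorphism from Lemma~\ref{bottomcellofI0} together with the map \(c^J_w\mapsto c_{ww_J}\) discussed after Lemma~\ref{geck1}. Your only organizational difference is citing Proposition~\ref{leftcellwj} as a black box rather than re-running the Robinson--Schensted argument, which is legitimate since it precedes the theorem.
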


\begin{rem}\label{wgbytab}
Lemma \ref{characterisetstd1} and Theorem \ref{wdetelmA} justify
the procedure used in \cite{howvan:wgraphDetSets} to compute a
\(W\!\)-graph with vertex set indexed by \(\STD(\lambda)\).
\end{rem}

\begin{rem}\label{specht-to-cell}
By Theorem \ref{wdetelmA} the \(\mathcal H\)-module \(\mathscr
S(\mathscr I\!_\lambda,J)\) derived from the \(W\!\)-graph ideal
\(\mathscr I\!_\lambda\) with respect to \(J\) is isomorphic to
the cell module associated with the cell \(\mathcal
C_\lambda=\mathscr I\!_\lambda w_J\). But it is known---see, for
example, \cite[Lemma 3.4]{mcpal:klrepSym}---that this cell module
is isomorphic to the Specht module~\(S^\lambda\) associated with
the partition~\(\lambda\). So we conclude that \(\mathscr
S(\mathscr I\!_\lambda,J)\cong S^\lambda\).
\end{rem}

Let \(\lambda\in\mathcal P(n)\) and let \(J=J_\lambda\) as above.
Recall that \(\mathscr I=D_J\) is a \(W\!\)-graph ideal with
respect to~\(J\), and the associated \(\mathcal H\)-module
\(\mathscr S(\mathscr I,J)\) is the induced module
\(\mathscr{S}_{\!\phi}\), defined in Section~\ref{dcrcells} above.
Since the maximal cell \(\mathcal C_1^J\) of the
\(W\!\)-graph~\(\Gamma(C_J)\) derived from \(\mathscr I\) gives
rise to the \(W\!\)-graph ideal \(\mathscr I\!_\lambda=\{\,v\in
W\mid v\leq_L v_\lambda\,\}\), it follows that the \(\mathcal
H\)-module \(\mathscr S(\mathscr I\!_\lambda,J)\) is isomorphic
to~\(\mathscr{S}_{\!\phi}/\mathcal A(C_J\setminus\mathcal
C_1^J)\). Writing \(f\) for the homomorphism
\(\mathscr{S}_{\!\phi}\to\mathscr S(\mathscr I\!_\lambda,J)\) with
kernel \(\mathcal A(C_J\setminus\mathcal C_1^J)\), it follows from
Lemma~\ref{basis-B} that \((\,f(b^J_w)\mid w\in \mathscr
I\!_\lambda\,)\) is an \(\mathcal A\)-basis of \(\mathscr
S(\mathscr I\!_\lambda,J)\), where the elements \(b^J_w=T_w\otimes
1\) for \(w\in D_J\) make up the basis \(B_J\)
of~\(\mathscr{S}_{\!\phi}\). Now if \(s\in S\) and \(w\in D_J\)
then
\[
T_sb^J_w=\begin{cases}
b^J_{sw}&\text{ if \(sw>w\) and \(sw\in D_J\)}\\
-q^{-1}b^J_w&\text{ if \(sw>w\) and \(sw\notin D_J\)}\\
b^J_{sw}+(q-q^{-1})b^J_w&\text{ if \(sw<w\),}\end{cases}
\]
and if we assume that \(w\in\mathscr I\!_\lambda\) and apply \(f\)
to these formulas we find that
\[
T_sf(b^J_w)=\begin{cases}
f(b^J_{sw})&\text{ if \(s\in\SA(w)\)}\\
-q^{-1}f(b^J_w)&\text{ if \(s\in\WD(w)\)}\\
f(b^J_{sw})+(q-q^{-1})f(b^J_w)&\text{ if
\(s\in\SD(w)\).}\end{cases}
\]
If \(s\in\WA(w)\), so that \(sw\in D_J\setminus\mathscr
I\!_\lambda\), then by Lemma~\ref{genGarnir}
\begin{equation}\label{WAcase}
T_sf(b^J_w)=f(b^J_{sw})=\sum_{\substack{y\in\mathscr
I_{\!\lambda}\\ y<sw}}r^s_{y,w}f(b^J_y)
\end{equation}
for some polynomials \(r^s_{y,w}\in q\mathcal A^+\). Morover,
since \(\mathscr I\!_\lambda\) is generated by the single element
\(v_\lambda\), it follows from
\cite[Proposition~7.9]{howvan:wgraphDetSets} that all the elements
\(y\) appearing in the sum in Equation~(\ref{WAcase}) satisfy
\(y\le w\), and we also know by Corollary~\ref{rywpyw} (see
Equation~(\ref{actsinWAGen})) that \(r^s_{w,w}=q\). So if we now
choose an isomorphism \(\theta\colon \mathscr S(\mathscr
I\!_\lambda,J)\to S^\lambda\) and for each \(t\in\STD(\lambda)\)
define \(b_t=\theta(f(b^J_w))\), where \(w\) is the unique element
of \(\mathscr I\!_\lambda\) such that \(t=w\tab_\lambda\), then we
conclude that \((\,b_t\mid t\in\STD(\lambda)\,)\) is a basis
of~\(S^\lambda\), and for all \(s\in S\) and
\(t\in\STD(\lambda)\),
\[
T_{s}b_{t} =
\begin{cases}
  b_{st}  & \text{if \(s \in \SA(t)\),}\\
  b_{st} + (q - q^{-1})b_{t} & \text{if \(s \in \SD(t)\),}\\
  -q^{-1}b_{t} & \text{if \(s \in \WD(t)\),}\\
  qb_{t} - \sum\limits_{u<t} r^{(s)}_{u,t}b_{s} & \text{if \(s \in \WA(t)\),}
\end{cases}
\]
for some \(r^{(s)}_{u,t}\in q\mathcal A^+\!\). We remark that it
is not hard to deduce that these basis elements are uniquely
determined, to within a scalar multiple, by the conditions that
\(T_sb_{\tab_\lambda}=-q^{-1}b_{\tab_\lambda}\) for all \(s\in
J_\lambda\), and \(b_{w\tab_\lambda}=T_wb_{\tab_\lambda}\) for all
\(w\in\mathscr I\!_\lambda\).

We have thus proved Proposition~6.3
of~\cite{howvan:wgraphDetSets}, the assertion of which was that
the polynomials \(r^{(s)}_{u,t}\) are all divisible by~\(q\).

\section{Acknowledgements}

I want to express my gratitude to A/Professor Robert B. Howlett,
who has helped in numerous fruitful discussions, leading to the
completion of the paper. I wish to thank Professor Andrew Mathas
for showing me several of the crucial references.

\printindex

\end{document}